\definecolor{dmagenta}{rgb}{.4,.1,.5}       
\definecolor{dblue}{rgb}{.0,.0,.5}     
\definecolor{mblue}{rgb}{.0,.0,.8}     
\definecolor{ddblue}{rgb}{.0,.0,.4}            
\definecolor{dred}{rgb}{.6,.0,.0}   
\definecolor{dgreen}{rgb}{.0,.5,.0}  
\definecolor{Eeom}{rgb}{.0,.0,.5}
\newtheorem{lemma}{Lemma}[section]
\newtheorem{theorem}{Theorem}[section]
\newtheorem{proposition}{Proposition}[section]
\newtheorem{corollary}{Corollary}[section]
\theoremstyle{definition}
\newtheorem{assumption}{Assumption}[section]
\theoremstyle{remark}
\newtheorem{remark}{Remark}[section]
\numberwithin{equation}{section}
\crefname{section}{Section}{Sections}
\crefname{subsection}{Section}{Sections}
\crefname{condition}{Condition}{Conditions}
\crefname{hypothesis}{Hypothesis}{Conditions}
\crefname{assumption}{Assumption}{Assumptions} 
\crefname{lemma}{Lemma}{Lemmas} 
\Crefname{figure}{Figure}{Figures}
\newcommand{\cX}{{\mathcal{X}}}  
\newcommand{\veps}{\varepsilon}
\newcommand{\beql}[1]{\begin{equation}\label{#1}}
\newcommand{\beq}{\begin{displaymath}}
\newcommand{\eeqno}{\end{displaymath}}
\newcommand{\eeq}{\end{equation}}
\newcommand{\E}{\mathbb{E}}
\newcommand{\PP}{\mathbb{P}}
\newcommand{\RR}{\mathds{R}}
\newcommand{\NN}{\mathds{N}}
\newcommand{\Ind}{\mathds{1}}   
\newcommand{\transp}{^{\mathsf{T}}}
\DeclareMathOperator*{\esssup}{ess\,sup}
\newcommand{\grad}{\nabla}
\newcommand{\calP}{\mathcal{P}}
\newcommand{\cC}{\mathcal{C}}
\newcommand{\cV}{\mathscr{V}}
\newcommand{\calS}{\mathcal{S}}
\newcommand{\calC}{\mathcal{C}}
\newcommand{\frL}{\mathfrak{L}}
\newcommand{\cZ}{\mathcal{Z}}
\newcommand{\calF}{\mathcal{F}}
\DeclareRobustCommand\widecheck[1]{{\mathpalette\@widecheck{#1}}}
\def\@widecheck#1#2{%
    \setbox\z@\hbox{\m@th$#1#2$}%
    \setbox\tw@\hbox{\m@th$#1%
       \widehat{%
          \vrule\@width\z@\@height\ht\z@
          \vrule\@height\z@\@width\wd\z@}$}%
    \dp\tw@-\ht\z@
    \@tempdima\ht\z@ \advance\@tempdima2\ht\tw@ \divide\@tempdima\thr@@
    \setbox\tw@\hbox{%
       \raise\@tempdima\hbox{\scalebox{1}[-1]{\lower\@tempdima\box
\tw@}}}%
    {\ooalign{\box\tw@ \cr \box\z@}}}
\newlength{\dhatheight}
\newcommand{\D}{ \mathrm{d}}
\let\oldtocsection=\tocsection
\let\oldtocsubsection=\tocsubsection
\let\oldtocsubsubsection=\tocsubsubsection
\renewcommand{\tocsection}[2]{\hspace{0em}\oldtocsection{#1}{#2}}
\renewcommand{\tocsubsection}[2]{\hspace{1em}\oldtocsubsection{#1}{#2}}
\renewcommand{\tocsubsubsection}[2]{\hspace{2em}\oldtocsubsubsection{#1}{#2}}
\newcommand{\ttl}{\Large Strong and weak quantitative estimates in slow-fast diffusions using filtering techniques}
\newcommand{\ttls}{Convergence estimates in slow-fast diffusions}
\begin{document}

\title[\ttls]{\ttl}

\author{Sumith Reddy Anugu$^\dag$}
\author{Vivek S. Borkar$^\ddag$}
\address{$^\dag$Institute f\"ur Mathematik, Technische Universit\"at Ilmenau,
    Weimarer~Str.~25, 98693 Ilmenau, Germany} \email{sumith-reddy.anugu@tu-ilmenau.de}
\address{$^\ddag$Department of Electrical Engineering,\\
 Indian Institute of Technology Bombay, Powai, Mumbai, India 400076. Work supported by a grant from Google Research Asia.}
\email{borkar.vs@gmail.com}
	
	\begin{abstract}
The behavior of slow-fast diffusions as the separation of scale diverges  is a well-studied problem in the literature.  In this short paper, we revisit this problem  and  obtain a new  proof of existing    strong quantitative convergence  estimates (in particular, $L^2$ estimates) and weak convergence estimates    in terms of $n$ (the parameter associated with the separation of scales). In particular, we obtain the rate of $n^{-\frac{1}{2}}$ in the strong convergence estimates and the rate of $n^{-1}$ for weak convergence estimate which are already known to be optimal in the literature.  We achieve this using nonlinear filtering theory  where we represent the evolution of  fast diffusion in terms of  its conditional distribution  given the  slow diffusion. We then use the well-known  Kushner-Stratanovich equation which gives the evolution of the conditional distribution of the fast diffusion given the slow diffusion and establish that this conditional distribution  approaches the invariant measure of the ``frozen" diffusion (obtained by freezing the slow variable in the evolution equation of the fast diffusion). At the heart of the analysis lies a key  estimate of a weighted {Lipschitz distance like function between a generic one-parameter family of measures and the family of  unique invariant measures (of the ``frozen" diffusion parametrized by a path)}. This estimate is in terms of the operator norm of the dual of the infinitesimal generator of the ``frozen" diffusion.
	\end{abstract}
	\keywords{slow-fast diffusions; two time scales; nonlinear filter; averaging}
	\subjclass[2010]{Primary: 60J60, Secondary: 60H10; 93E11; 93C70}
	\maketitle
	\section{Introduction} 
	Multi-scale phenomena are ubiquitous in many areas of science and engineering such as neurosciences \cite{Rinzel1987, izhikevich2007dynamical,
 amir2002burst}, weather forecasting \cite{powers2017weather}, cellular dynamics \cite{harvey2011multiple}, chemical systems \cite{krischer1992oscillatory}, ecology \cite{rinaldi2000}, etc (see~\cite{BERTRAM2017, kristiansen2023review,perryman2014adapting} for elaborate discussions on applications). 
	  In this paper, we consider the following  diffusion given as a system of stochastic differential equations (SDEs): 
	\begin{align}\label{eq-intro-X}
\D X^n_t&=b(X^n_t,Y^n_t)\D t +\sigma(X^n_t) \D W_t,\,&X^n_0= \bar x,\\\label{eq-intro-Y}
\D Y^n_t&=nh(X^n_t,Y^n_t)\D t +\sqrt n \eta(X^n_t,Y^n_t) \D B_t,\,& Y^n_0=\bar y\,.
\end{align}
We are interested in the behavior of the above diffusion as $n$ becomes very large. It is clear that for large $n$, the process $Y^n$ evolves much faster than the process $X^n$ - for this reason, we refer to $(X^n,Y^n)$  as slow-fast diffusion. Over the time scale of $\mathscr{O}(1)$, if $X^n=X_1$ in the beginning of the timescale, the evolution of $Y^n$ over timescale $\mathscr{O}(n^{-1})$ can be approximated well by treating $X^n\simeq X_1$ (throughout the time interval of the order $n^{-1}$) and this approximation of $Y^n$ can be used to obtain the evolution of $X^n$ over the time increments of order $n^{-1}$. In other words, we consider the following SDE (often referred to as ``frozen" diffusion) 
\begin{align*} \D Y^{*,z}_t&=h(z,Y^{*,z}_t)\D t + \eta(z,Y^{*,z}_t) \D B_t,\,& Y^{*,z}_0=\bar y\end{align*} with $z$ as a parameter.  Under an appropriate  ``uniform in $z$" stability condition (see Assumption~\ref{assump-stab}) on $Y^{*,z}$ (with $\pi^{*,z}$ as the unique invariant measure), it is  shown in the literature (see below for the literature survey) that the process $X^n$ approaches $\bar X$ which is given as a solution to 
\begin{align}\label{eq-intro-lim}\D \bar X_t&=\int b(\bar X_t,y)\pi^{*,\bar X_t}(\D y)\D t +\sigma(\bar X_t) \D W_t,\,& \bar X_0= \bar x\,.\end{align}
Such a result is often referred to as the averaging principle. 

\subsection*{Literature survey} In the following, we give a survey of existing literature (which is by no means complete) on the problem described above. This problem was first studied in the context of ordinary differential equations  by N. Krylov and N. Bogolyubov in \cite{krylov1935methodes} and in the context of SDEs by R. Z. Khasminskii in \cite{khasminskij1968principle}. Since then the averaging principle was investigated extensively under various settings. For an early comprehensive treatment on this topic, we refer the reader to \cite{freidlin2012random}. The large $n$ behavior had also been investigated in the context of stochastic control and filtering problems; see \cite{kushner2012weak,bardi2023singular, kouhkouh2022some,hicham2024,beeson2022approximation,qiao2023convergence,qiao2024limit,bardi2024deep,borkar2007singular}. In literature, other variations of slow-fast diffusion with drastically different limiting behavior were also studied; see \cite{kabanov2003two} for a recent exposition in this regard.

We  discuss the relevant literature below which we remark again  is, unsurprisingly, not exhaustive.
In the last two decades, this problem received much attention and was investigated in other contexts like the distribution dependent SDEs (or the Mckean-Vlasov type SDEs; for example, see \cite{bezemek2023interacting, cheng2024, hao2023mckean, HONG2022,rockner2021strong,xu2021strong}) or  the case of L\'evy process driven diffusions (for example, see \cite{dror2007,sun2023,xiaobin2022,zhang2018}). Below,  we focus our attention mainly on the existing works in the context of  SDEs driven by Brownian motion due to its relevance to the work in this paper.  The existing results can be classified into two types depending how $X^n$ and $Y^n$ are coupled.
\begin{enumerate}
\item (Partially coupled) In this case, $Y^n$ is coupled to  $X^n$ only through drift coefficient, \emph{i.e.,} $\sigma(x,y)$ is independent of $y$.  The strong convergence  estimate of $\|X^n_t-\bar X_t\|^2$ under H\"older continuity of the coefficients  is investigated in \cite{rockner2019strongweakconvergenceaveraging,rockner2021averaging}. In  \cite{feo2023, de2023sdes, ge2024}, this problem is considered in the case of SDEs on Hilbert spaces. Using an asymptotic expansion approach, authors in \cite{khasminskii2004} established the averaging principle. In  \cite{weinan2005analysis},  the convergence of numerical schemes of the slow-fast dynamical system (with fast dynamics being a diffusion and slow dynamics being an ODE) is studied. The case of stochastic functional differential equations is considered in \cite{wu2022fast,wu2020averaging}. In addition to this, estimates on weak convergence of $X^n$ to $\bar X$ are also studied in  \cite{rockner2019strongweakconvergenceaveraging} under H\"older continuity of coefficients.

\item (Fully coupled) In this case, $\sigma(x,y)$ is allowed to  depend on $y$ and the limiting process $\bar X$ (if it exists) evolves according to 
$$\D \bar X_t=\int b(\bar X_t,y)\pi^{*,\bar X_t}(\D y)\D t +\bar \sigma(\bar X_t) \D W_t,\,\quad \bar X_0= \bar x$$
with $\bar \sigma(\cdot)$ such that $\bar \sigma(x)\bar \sigma(x)\transp = \int\sigma(x,y)\sigma(x,y)\transp \pi^{*,x}(\D y)$. 
  This case is much less studied and the works in this direction include \cite{liu2010,  rockner2021diffusion,brehier2022} where only weak convergence of $X^n $ to $\bar X$ is established. The important difference between the fully coupled and the partially coupled case is that the strong convergence estimates fail to hold in general. We refer the reader to \cite[Section 4]{liu2010} for an illustration of this fact.

\end{enumerate}

In comparison with the above literature, our work in this paper differs significantly in its  starting point - much of the literature uses analytical techniques and/or works with~\eqref{eq-intro-X}--\eqref{eq-intro-Y}.  The starting point of our approach uses the tools from nonlinear filtering theory, in particular, the Kushner-Stratanovich equation which is the evolution equation of the conditional distribution of $Y^n$ given the filtration generated by $X^n$ (say, $\pi^n_t(\cdot)$). As a consequence, we re-express the evolution equation of the slow process $X^n$ completely in terms of $X^n$ and $\pi^n_t(\cdot)$. More precisely, $X^n$ now evolves according to 
\begin{align}\label{eq-intro-X-1} \D X^n_t=\int b(X^n_t,y)\pi^n_t(\D y)\D t +\sigma(X^n_t) \D \widetilde W^n_t,\,\quad X^n_0= \bar x,\end{align}
for some Brownian motion $\widetilde W^n$ adapted to the filtration of $X^n$.
Comparing~\eqref{eq-intro-X} with the above display, we see that the drift $b(X^n,Y^n)$ is replaced by   $\int b(X^n_t,y)\pi^n_t(\D y)$ and the Brownian motion $W$ is replaced by $\widetilde W^n$.  The big advantage of this approach is that both  $\int b(X^n_t,y)\pi^n_t(\D y)$ (the drift in~\eqref{eq-intro-X-1}) and   $\int b(\bar X_t,y)\pi^{*,\bar X_t}(\D y)$ (the drift in~\eqref{eq-intro-lim}) are integrals with respect to probability measures $\pi^n_t$ and $\pi^{*,\bar X_t}$, respectively. This means that  the difference between two drifts mentioned above, can be bounded by an appropriate `distance' between $\pi^n_t$ and $\pi^{*,\bar X_t}$. It turns out that this `distance' resembles the weighted {Lipschitz} metric. Under the assumption of exponential ergodicity of $Y^{*,z}$, uniform in $z$,  the associated  Lyapunov function is taken as the weight. From here, with the help of results from the ergodic theory of diffusions, we estimate this weighted {`distance'  between $\{\pi^n_t:0\leq t\leq T\}$ and $\{\pi^{*,\bar X_t}:0\leq t\leq T\}$} under the assumptions of {appropriate Lipschitz continuity of $b,h,\sigma,\eta$, differentiability of $\eta$ (in fast variable)  and linear growth  of the coefficients $h,\sigma$ and the boundedness of coefficients $b$  and $\eta$. This consequently helps us prove both strong and weak convergence estimates. This is in contrast to the existing results in the literature where boundedness and certain smoothness  of coefficients are assumed;  see for example, the regularity conditions in the hypothesis of \cite[Theorem 2.1]{rockner2019strongweakconvergenceaveraging} (for $\alpha=1$ in that paper). On the other hand, our approach in this paper requires a more quantitative version of stability (hence, stronger form of stability), whereas the existing literature typically assumes a more qualitative form of stability; for illustrative purpose, compare  Condition (Hb) of \cite{rockner2019strongweakconvergenceaveraging}  with Assumption~\ref{assump-stab}. Moreover, we are able to prove weak convergence estimate for test functions that are twice differentiable (with H\"older continuous second  derivatives) which is a larger class than the class used in \cite[Theorem 2.5]{rockner2019strongweakconvergenceaveraging} for $\alpha=1$ \emph{viz.} the set of thrice differentiable functions.}

Even though our approach allows for  weaker conditions on coefficients, there is one drawback to our approach: since $\widetilde W^n$ in~\eqref{eq-intro-X-1} a priori depends on $n$, our analysis only concludes that $X^n-X^{*,n}$ approaches  zero, where $X^{*,n}$   is the solution to 
\begin{align}\label{eq-intro-X-2} \D X^{*,n}_t=\int b(X^{*,n}_t,y)\pi^{*,X^{*,n}_t}(\D y)\D t +\sigma(X^{*,n}_t) \D \widetilde W^n_t,\,\quad X^{*,n}_0= \bar x\,.\end{align}

In particular, we show that under our assumptions on coefficients and exponential ergodicity of $Y^{*,z}$,  $X^n-X^{*,n}$ converges to zero in $L^2$ at rate of $n^{-\frac{1}{2}}$ and also show that $X^n$ converges weakly to $\bar X$ (note that $\bar X$ and $X^{*,n}$ have the same law) at a rate $n^{-1}$. It is now well known from the work of \cite{liu2010} that these rates are optimal.

	This short paper is organized as follows. We briefly introduce the notation used in the paper in the rest of this section. In Section~\ref{sec-model-results}, we give the description of our slow-fast diffusion model and recall a useful representation from \cite{wong1971} in the context of  the slow variable.  The statements of our main results  also given in this  section. In Section~\ref{sec-filter}, we  prove key lemma involving conditional distribution of fast variable (given slow variable) and the invariant measure of the ``frozen" diffusion.    Finally, we provide the proofs of our main results in Section~\ref{sec-proof}.
	
	\subsection*{Notation} $(\Omega,\calF, \calF_t,\PP)$ denotes our abstract complete filtered probability space. For a process $Z$, $\{\calF_t(Z):t\geq 0\}$ denotes the filtration generated by it. For any $k\in \NN$, $\|x\|$ denotes the Euclidean norm of $x\in \RR^k$ {and $\langle x,y\rangle$ denotes the Euclidean inner product on $\RR^k$}. For a  probability measure $\mu$ on a measurable space $S$ with $\sigma$-algebra $\calS$ and an $\calS$-measurable function $f:S\rightarrow \RR$,  $\mu[f]\doteq\int_{S}f(x)\mu(\D x)$, $\calP(S)$ denotes the set of all probability measures on $(S,\calS)$ and $\|f\|_\infty=\esssup_{x\in S} |f(x)|$. {If $f: S\rightarrow \RR^k$, with a slight abuse of notation, $\|f\|_\infty= \max_{1\leq i\leq k}\esssup_{x\in S} |f_i(x)|$.} 
$\Ind_{\{ \ \cdot \ \in \ A\}}$ denotes the indicator function {corresponding to}  Borel set $A\subset \RR^k$. { $B_R(z)$ denotes the open ball of radius $R$ around $z\in \RR^k$. For $k,m\in \NN$ and $0<\theta<1$, 
	$\calC^{m,\theta}(\RR^k)$ denotes the space of functions on $\RR^k$ that are $m$-times differentiable with $m$th derivative being $\theta$-H{\"o}lder continuous. The set $\calC_b^{m,\theta}(\RR^k) $ denotes the set of $u\in \calC^{m,\theta}(\RR^k)$ such that $\|u\|_{m,\theta,\infty}<\infty$. Here,
	$$ \| u\|_{m,\theta,\infty}\doteq \|u\|_\infty + \sum_{|\beta|=0}^m\|D^\beta u\|_\infty + \sup_{x\neq y\in \RR^k} \frac{|u(x)-u(y)|}{|x-y|^\theta},$$
	with $D^\beta u$ being  the $\beta$th order derivative of $u$. $\calC^{m}(\RR^k)$ denotes the set of $m$-times differentiable functions with continuous derivatives up to $m$th order. Analogously, $\calC_b^{m}(\RR^k)$ is defined. Finally, for $k_1,k_2\in \NN$, a map $f:\RR^{k_1}\to \RR^{k_2}$ is said to be Lipschitz, if there exists a constant $K>0$ such that for every $x,y\in\RR^{k_1}$, 
	$$ \|f(x)-f(y)\|\leq K\|x-y\|\,.$$
	The smallest $K$ such that the above inequality holds is referred to as the Lipschitz constant (denoted by $\text{Lip}(f)$). However, any such constant $K$ satisfying the above inequality is also denoted by $\text{Lip}(f)$, with some abuse of notation. We use $C$ to denote a  generic positive constant whose finiteness is more important than the actual value. 
	}
	
	\section{Model and results} \label{sec-model-results}
Let $(W,B)$ be a pair of  independent $p$ and $q$--dimensional Brownian motions. We consider a family $\{(X^n,Y^n):n\in \NN\}$ of $\calC(\RR_+, \RR^p\times \RR^q)$--valued random variables with $\calC(\RR_+, \RR^p\times \RR^q)$ being the space of $\RR^p\times \RR^q$--valued continuous functions on $\RR_+$ equipped with uniform topology on compact sets. The pair  $(X^n,Y^n)$ is a $(p+q)$--dimensional slow-fast diffusion given as a solution to
\begin{align}\label{eq-X}
\D X^n_t&=b(X^n_t,Y^n_t)\D t +\sigma(X^n_t) \D W_t,\,& X^n_0= \bar x,\\\label{eq-Y}
\D Y^n_t&=nh(X^n_t,Y^n_t)\D t +\sqrt n \eta(X^n_t,Y^n_t) \D B_t,\,& Y^n_0=\bar y\,.
\end{align}	
Here, $b:\RR^p\times \RR^q \rightarrow \RR^p$, $h:\RR^p\times \RR^q\rightarrow \RR^q$, $\sigma:\RR^p\rightarrow \RR^{p\times p}$ and $\eta:\RR^p\times \RR^q\rightarrow \RR^{q\times q}$. Clearly for large $n$, $Y^n$ is the fast diffusion  and $X^n$ is the slow diffusion.
\begin{remark} Our approach in this paper also allows for $(W,B)$ to be dependent with say, a cross-quadratic variation $\langle W,B\rangle_t\not\equiv 0$ (that does not dependent of $n$). In this case, the second integral in~\eqref{eq-KS} below which is 
$$ \int_0^t \Big(\pi^n_{r}[f(X^n_r,\cdot)b^\dagger(X^n_r,\cdot)]-\pi_{r}^n[f(X^n_r,\cdot))]\pi^n_{r}[b^\dagger(X^n_r,\cdot)]\Big)\sigma^{-1}(X^n_r)\D I^n_r$$ will be replaced by 
$$\int_0^t \Big(\pi^n_r[Q_r]+\Big(\pi^n_{r}[f(X^n_r,\cdot)b^\dagger(X^n_r,\cdot)]-\pi_{r}^n[f(X^n_r,\cdot))]\pi^n_{r}[b^\dagger(X^n_r,\cdot)]\Big)\sigma^{-1}(X^n_r)\Big)\D I^n_r$$ 
with $Q_t$ given by 
$Q_t\doteq \frac{\D\langle W,B\rangle_t }{\D t}$. It can be seen from our approach that follows, that this case can be handled in the same way as the case where $W$ and $B$ are independent.  For sake of keeping expressions in the  analysis  simple, we assume that $W$ and $B$ are independent.
\end{remark}

To proceed further, we impose certain   regularity and ellipticity conditions on the coefficients.
{\begin{assumption}\label{assump-regularity} The coefficients $b$, $\sigma$, $h$ and $\eta$ satisfy the following:
\begin{enumerate} 
\item [(i)]  There exists $L>0$ and $y_0\in\RR^q$ such that for $x\in \RR^p$ and $y\in \RR^q$, 
\begin{align*}\|h(z,y_0)\|&\leq L,\quad\quad\quad \|\eta(x,y)\|+\|b(x,y)\|\leq L,\\
\sup_{x\in \RR^p} \|h(x,y)\|^2&\leq L(1+\|y\|^2),\quad\text{and}\quad \|\sigma(x)\|^2\leq L (1+ \|y\|^2)\,.\end{align*}
\item [(ii)]The maps  $b,h,\sigma$ and $\eta$  are  Lipschitz. Moreover,  for every $z\in \RR^p$,  $\eta(z,\cdot)$ is  differentiable  and $\nabla_y\eta(\cdot,y)$ is Lipschitz with Lipschitz constant uniformly bounded in $y$.  
\item [(iii)] There exists $\delta>0$ such that  for every $x,z\in \RR^p$ and $y,w\in \RR^q$,
$$ z^\dagger \sigma(x)\sigma^\dagger (x)z\geq \delta\|z\|^2 \quad \text{and}\quad  w^\dagger \eta(x,y)\eta^\dagger (x,y)w\geq \delta  \|w\|^2\,.$$
\end{enumerate}
\end{assumption}
}
\begin{remark}
The reader may notice that the diffusion coefficient $\sigma$ in~\eqref{eq-X} is independent of $Y^n$ which is unlike the diffusion coefficient $\eta$ in~\eqref{eq-Y}  that depends on both $X^n$ and $Y^n$. With regards to the approach used in this paper, the reason behind this choice in this paper is a technical one - as mentioned already, we make a heavy use of the representation theorem due to E. Wong in \cite{wong1971} and it is not clear if the representation still remains valid if we introduce the dependence of $Y^n$ into the diffusion coefficient $\sigma$ in the equation for $X^n$. As mentioned in the introduction,  most of the  existing works on slow-fast diffusions allow the diffusion coefficient of the fast diffusion to depend only on fast variable. In \cite{rockner2021diffusion}, authors investigate this problem in the context of SDEs while allowing for $\sigma$ to depend on the fast variable $Y^n$.   \end{remark}
{\begin{remark} We give a few comments on Assumption~\ref{assump-regularity}. 
The first inequality in (i) is imposed in order to invoke \cite[Theorem 1.2]{menozzi2021} which is a crucial result that is used to establish Lemma~\ref{lem-property}. The boundedness of $b$ (the second inequality in (i))  is used crucially only in proving Lemma~\ref{lem-lipschitz}; whereas, the boundedness of $\eta$ and aforementioned differentiability of $\eta$ is used in proof of Lemma~\ref{lem-lipschitz-inv-meas} to invoke an existing result from \cite[Theorem 2.2]{bogachev2018}.
\end{remark}}

To analyse the limiting behavior as $n\to \infty$, we consider an intermediate ``frozen" process $Y^{n,z}$ (with both $n\in \NN$ and $z\in \RR^p$  as parameters) given as a strong solution to 
\begin{align}\label{eq-yforfixedx}
\D Y^{n,z}_t=n h(z,Y^{n,z}_t)\D t +{\sqrt{n}}\eta(z,Y^{n,z}_t) \D B_t,\quad Y^{n,z}_0=y\,.
\end{align}
Again, for every $z\in \RR^p$, from the conditions on  $h$ and $\eta$ in Assumption~\ref{assump-regularity}, we know that the process $Y^{n,z}$  exists uniquely in the strong sense. {For $(z,y)\in \RR^p\times \RR^q$, let $\frL^z:\calC^2(\RR^q)\rightarrow \calC(\RR^q)$ and $\widehat \frL^z:\calC^2(\RR^p)\rightarrow \calC(\RR^p)$  be operators defined as
\begin{align}
\frL^z[f](y)&\doteq \text{Tr}\big(\eta(z,y )\eta^\dagger(z,y) D_y^2 f(y)\big) + h(z,y )\cdot \grad_y f(y)\\
 \widehat {\mathfrak L}^{z,y} [f](z)&\doteq  \text{Tr}\big(\sigma(z )\sigma^\dagger(z) D_x^2 f(z)\big) + b(z,y )\cdot \grad_x f(z)\,.
\end{align}}
It is clear that the operator $n\mathfrak{L}^z$ is the infinitesimal generator of the process $Y^{n,z}$, for every $n\in \NN$ and $z\in \RR^p$. { With a slight abuse of notation, we write $Y^{1,z}$ as $Y^z$.} 

The following    stability assumption  on the ``frozen" process $Y^{n,z}$  is imposed to ensure the existence of a non-trivial limiting behavior of $\{X^n:n\in \NN\}$ as $n\to \infty$.
\begin{assumption}\label{assump-stab} There exists a symmetric positive definite $q\times q$ matrix $A$  and constants $\delta_0,\delta_1>0$ such that   for $z\in \RR^p$,
\begin{align}\label{eq-exp-stab-assump}
 \langle h(z,y),A y\rangle  \leq \delta_0 - \delta_1\langle y,Ay\rangle\,.
\end{align}
\end{assumption}
{
\begin{remark}\label{rem-k-moment}
Define,  $\cV_k(y)= (\langle y,Ay\rangle)^k$. It can be easily verified that for any $k\geq 1$, there are constants $\beta_0(k),\beta_1(k)>0$ such that for $z\in \RR^p$ and $y\in \RR^q$, 
\begin{align*}
\mathfrak L^z[\cV_k](y)\leq \beta_0(k)-\beta_1(k) \cV_k(y)\,.
\end{align*}
From the above display, an application of It\^o's formula to $\cV_k(Y^n_t)$ gives us 
$$ \sup_{0\leq t\leq T}\E\big[\cV_k(Y^n_t)\big]\leq \cV_k(\bar y) + \beta_0(k) T\,.$$
\end{remark}
In the rest of the paper, we simply set $\cV_1=\cV$, $\beta_0(1)=\beta_0$  and $\beta_1(1)=\beta_1$.  } Set $\cZ\doteq \cC^2(\RR^q)\cap \cC_\cV(\RR^q) $, where  $\cC_\cV(\RR^q)$ as the set of continuous functions $f:\RR^q\rightarrow \RR$ such that 
\begin{align*}
\limsup_{\|y\|\to\infty}\frac{|f(y)|}{\cV(y)}<\infty\,.
\end{align*}
\begin{remark}\label{rem-inv-meas}
It is well known that  under the above assumption, for every $z\in \RR^p$, the diffusion {$Y^{z}$} is exponentially ergodic (uniformly in $z\in \RR^p$) and in particular, has a unique invariant measure which we denote as  $\pi^{*,z}$; see \cite[Theorem 2.6.10]{arapostathis2011}. Moreover, from \cite[Theorem 2.6.16]{arapostathis2011}, it is known that $\pi\in \calP(\RR^q)$ is the unique invariant measure of {$Y^{z}$} if and only if $ \pi\big[\mathfrak L^z[g]\big]=0$, for every $g\in \cZ$. Also, we have {$\sup_{z\in \RR^p} \pi^{*,z}[\cV]<\frac{\beta_0}{\beta_1}$.}
\end{remark}
We now re-express the process $X^n$ in a form that is ``entirely in terms of itself". To do this, we make use of the notion of conditional distribution. This is the foundation of our approach in this paper.  More precisely, it involves giving an alternative evolution equation for $X^n$, where the drift and the diffusion terms are adapted with respect to $\calF_t(X^n)$, instead of $\calF_t(W,B)$. In particular, the drift term involves the conditional distribution of fast variable $Y^n$ given $\calF_t(X^n)$. We then recall the well-known Kushner-Stratanovich equation (see~\eqref{eq-KS} below) that dictates the evolution of the  conditional distribution of $Y^n$ given $\calF_t(X^n)$.    Let $\pi^n_t(\cdot)$ be the conditional distribution of $Y^n_t$ given $\calF_t(X^n)$, \emph{i.e.,} $$\pi^n_t(\cdot)\doteq \PP(Y^n_t\in \cdot \ |\calF_t(X^n))\,.$$ It is well-known that for $f$ in an appropriate function class, $\pi^n_t[f(X^n_t,\cdot)]$ is an $\calF_t(X^n)$--semi-martingale. Moreover, we have the following well-known result (\cite[Theorem 8.1]{liptser2001}).
\begin{proposition}\label{prop-KS} {Suppose $f\in \cC^2(\RR^p\times \RR^q)$ such that 
\begin{align}\label{eq-KS-cond}
\sup_{0\leq t\leq T}\E\big[\big(f(X^n_t,Y^n_t)\big)^2\big]<\infty\quad \text{and}\quad \int_0^T \E\Big[\Big(\big|\widehat {\mathfrak L}^{X^n_r,Y^n_r}[f(X^n_r,Y^n_r)\big|^2+ \big|\mathfrak L^{X^n_r} [f(X^n_r,Y^n_r)]\big|^2\Big)\D r\Big]<\infty\,.
\end{align}
Then the following holds: for $0\leq t\leq T$ }
\begin{align}\nonumber
\pi^n_{t}[f(X^n_t,\cdot)]&=f(\bar x,\bar y)+{\int_0^t \pi_{r}^n\big[\widehat {\mathfrak L}^{X^n_r,\cdot}[f(X^n_r,\cdot)]+ \mathfrak L^{X^n_r} [f(X^n_r,\cdot)]\big]\D r}\\\label{eq-KS}
 &\quad + \int_0^t \left(\pi^n_{r}[f(X^n_r,\cdot)b^\dagger(X^n_r,\cdot)]-\pi_{r}^n[f(X^n_r,\cdot))]\pi^n_{r}[b^\dagger(X^n_r,\cdot)]\right)\sigma^{-1}(X^n_r)\D I^n_r, \quad \text{$\PP$--a.s.}
\end{align}
Here, $I^n_t\doteq  \int_0^t \sigma^{-1}(X^n_r)\big(\D X^n_r-\pi^n_{r}[b(X^n_r,\cdot)]\D r\big) $.

\end{proposition}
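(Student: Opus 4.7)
The statement is a standard Kushner--Stratonovich filtering equation; I would follow the derivation given in \cite[Theorem 8.1]{liptser2001}, whose hypotheses are implied by \eqref{eq-KS-cond}. The plan is to sketch the main steps and to highlight where the integrability hypotheses enter.

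First, I would verify that the innovation $I^n$ is an $\calF_t(X^n)$-Brownian motion. Rewriting
\[
\D I^n_r = \D W_r + \sigma^{-1}(X^n_r)\bigl(b(X^n_r,Y^n_r) - \pi^n_r[b(X^n_r,\cdot)]\bigr)\D r
\]
displays $I^n$ as a continuous $\calF_t$-semimartingale whose quadratic variation is $r$ times the $p\times p$ identity; the drift projects to zero against $\calF_t(X^n)$, so L\'evy's characterization applies. Next, I would apply It\^o's formula to $f(X^n_t,Y^n_t)$; the scaling of~\eqref{eq-Y} produces the infinitesimal-generator integrand $\widehat{\mathfrak L}^{X^n_r,Y^n_r}[f(X^n_r,\cdot)] + n\mathfrak L^{X^n_r}[f(X^n_r,\cdot)]$ evaluated at $Y^n_r$, while Assumption~\ref{assump-regularity} combined with~\eqref{eq-KS-cond} ensures that both the $\D W$ and $\D B$ stochastic integrals are true $L^2$-martingales. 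Taking conditional expectation with respect to $\calF_t(X^n)$ and exchanging it with the Lebesgue integral by Fubini (justified by the second condition of~\eqref{eq-KS-cond}) produces the $\D r$-term of~\eqref{eq-KS}.

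The main technical step, and principal obstacle, is identifying the residual conditional $\calF_t(X^n)$-martingale as the stochastic integral against $I^n$ in~\eqref{eq-KS}. Since $\calF_t(X^n)$ coincides with $\calF_t(I^n)$ up to null sets (a classical property of the innovation), the martingale representation theorem yields a predictable $\Phi^n$ with
\[
\pi^n_t[f(X^n_t,\cdot)] - f(\bar x,\bar y) - \int_0^t\pi^n_r\bigl[\widehat{\mathfrak L}^{X^n_r,\cdot}[f(X^n_r,\cdot)] + n\mathfrak L^{X^n_r}[f(X^n_r,\cdot)]\bigr]\D r = \int_0^t \Phi^n_r\,\D I^n_r.
\]
To pin down $\Phi^n$ I would compute the joint quadratic covariation $\D\langle \pi^n_\cdot[f(X^n_\cdot,\cdot)],\, I^n\rangle_r$ in two ways: the display above gives $\Phi^n_r\,\D r$, whereas a direct computation using the definition of conditional expectation, the SDE for $X^n$, and the formula for $\D I^n_r$ above yields
\[
\bigl(\pi^n_r[f(X^n_r,\cdot)\,b^\dagger(X^n_r,\cdot)] - \pi^n_r[f(X^n_r,\cdot)]\,\pi^n_r[b^\dagger(X^n_r,\cdot)]\bigr)\sigma^{-1}(X^n_r)\,\D r.
\]
Equating the two forces the stated integrand. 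The integrability conditions in~\eqref{eq-KS-cond} are precisely what is needed to make both the martingale representation and the quadratic-covariation identification rigorous in the unbounded state space.
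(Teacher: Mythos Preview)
The paper does not prove this proposition at all; it simply states it as the well-known Kushner--Stratonovich equation and cites \cite[Theorem~8.1]{liptser2001}. Your sketch follows that standard derivation and is essentially correct.

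One point of imprecision is worth flagging. To obtain the stochastic-integral representation of the residual $\calF_t(X^n)$-martingale against $I^n$, you invoke the equality $\calF_t(X^n)=\calF_t(I^n)$ and then apply the ordinary martingale representation theorem. As the paper itself notes in Remark~\ref{rem-innovation}, that filtration identity is \emph{not} automatic---it is the innovations conjecture, and holds here only because $b$ is bounded (Assumption~\ref{assump-regularity}(i)) via \cite{allinger1981new}. The cleaner and more robust route, and the one used in the standard references, is the Fujisaki--Kallianpur--Kunita representation theorem, which shows directly that every square-integrable $\calF_t(X^n)$-martingale admits a stochastic-integral representation against the innovation process $I^n$, without any appeal to the filtration equality. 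With that substitution your argument goes through without further change.
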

\begin{remark}\label{rem-innovation}
The process $I^n$ is referred to as the innovations process and is a $p$--dimensional  $\calF_t(X^n)$--adapted Brownian motion.  Under certain conditions (see \cite[Theorem 1]{allinger1981new}), $\calF_t(I^n)$ is  identical to $\calF_t(X^n)$ modulo $\PP$--null sets, for every $t\geq 0$. 
\end{remark}

In the next result, as mentioned above we  re-express the evolution of the process $X^n$ completely in  terms of $\calF_t(X^n)$-adapted processes. This is the cornerstone for our approach in this paper.
\begin{proposition}For $n\in \NN$, there exists a $p$--dimensional $\calF_t(X^n)$--adapted Brownian motion $\widetilde W^n$ such that   in terms of $\widetilde W^n$, the process $X^n$ given by~\eqref{eq-X} can be alternatively represented  in the following form: for $0\leq t\leq T$,
\begin{align}\label{eq-representation}
 X^n_t&= \bar x+ \int_0^t\int_{\RR^q} b(X^n_r,y)\pi^n_r(\D y)\D r +\int_0^t\sigma(X^n_r) \D \widetilde W^n_r,\quad \text{$\PP$--a.s.}
\end{align} 
Moreover, $\widetilde W^n=I^n$.
\end{proposition}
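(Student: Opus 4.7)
The plan is to set $\widetilde W^n \df I^n$ directly, where $I^n$ is the innovations process from Proposition~\ref{prop-KS}, and then verify the two claims separately: (a) that $I^n$ is a $p$--dimensional $\calF_t(X^n)$--adapted Brownian motion, and (b) that the representation~\eqref{eq-representation} holds with this choice. Claim (b) is in fact immediate: by the definition
$$I^n_t = \int_0^t \sigma^{-1}(X^n_r)\big(\D X^n_r - \pi^n_r[b(X^n_r,\cdot)]\D r\big),$$
pre-multiplying the differential form by $\sigma(X^n_t)$ and rearranging gives $\D X^n_t = \pi^n_t[b(X^n_t,\cdot)]\D t + \sigma(X^n_t)\D I^n_t$, which upon integration yields~\eqref{eq-representation}. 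The invertibility of $\sigma(X^n_r)$ needed here is guaranteed by the ellipticity condition in Assumption~\ref{assump-regularity}(iii).

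For claim (a), which is the substantive content, the argument follows the classical innovations approach in nonlinear filtering (cf.\ Liptser--Shiryaev). Substituting~\eqref{eq-X} into the definition of $I^n$ yields
$$I^n_t = W_t + \int_0^t \sigma^{-1}(X^n_r)\big(b(X^n_r,Y^n_r) - \pi^n_r[b(X^n_r,\cdot)]\big)\D r.$$
Adaptedness of $I^n$ to $\{\calF_t(X^n)\}$ is clear from its defining expression. The process is continuous, and its quadratic covariation matrix is inherited from that of $W$, so $\langle I^n, I^n\rangle_t = t\, \mathrm{Id}_{p\times p}$. The remaining task is to verify that $I^n$ is an $\{\calF_t(X^n)\}$--martingale, after which Lévy's characterization immediately identifies it as a standard $p$--dimensional Brownian motion with respect to $\{\calF_t(X^n)\}$.

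To establish the martingale property, I would use the tower property together with the defining property of the conditional distribution $\pi^n_r$. For $0 \le s < t$ and any bounded $\calF_s(X^n)$--measurable random variable $\xi$, one computes
$$\E\Big[\xi \int_s^t \sigma^{-1}(X^n_r)\big(b(X^n_r,Y^n_r) - \pi^n_r[b(X^n_r,\cdot)]\big)\D r\Big]$$
via Fubini and conditioning on $\calF_r(X^n)$ at each $r\in[s,t]$; since $\pi^n_r[b(X^n_r,\cdot)] = \E[b(X^n_r,Y^n_r)\mid \calF_r(X^n)]$ and $\xi,\, \sigma^{-1}(X^n_r)$ are $\calF_r(X^n)$--measurable, the inner conditional expectation vanishes. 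Combined with $\E[\xi(W_t - W_s)] = 0$ (using independence of $W_t - W_s$ from $\calF_s(X^n) \subset \calF_s(W,B)$), we obtain $\E[\xi(I^n_t - I^n_s)] = 0$, proving the martingale property. The boundedness of $b$ and the ellipticity bound on $\sigma^{-1}$ from Assumption~\ref{assump-regularity} ensure that all the relevant integrability is in place.

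The main technical obstacle will be the rigorous handling of the martingale verification: one must be careful that $W$ is a priori a martingale only with respect to the larger filtration $\{\calF_t(W,B)\}$, not with respect to the sub-filtration $\{\calF_t(X^n)\}$. This is overcome precisely by the conditional averaging identity for $\pi^n_r[b(X^n_r,\cdot)]$, which cancels the ``extra'' drift that appears when the Brownian motion $W$ is re-expressed through the coarser filtration. Once this is done, Lévy's theorem closes the argument.
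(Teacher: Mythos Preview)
Your proposal is correct and follows essentially the same route as the paper's second approach: define $\widetilde W^n\df I^n$, read off the representation~\eqref{eq-representation} directly from the definition of $I^n$, and use that $I^n$ is an $\calF_t(X^n)$--adapted Brownian motion. The paper simply quotes this last fact from the filtering literature (see Remark~\ref{rem-innovation}), whereas you supply the standard L\'evy-characterization argument in detail; either way the content is the same.
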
 
\begin{proof} The first part of the proposition can be proved in two ways: \emph{via.} 1.) direct application of a very general result involving semi-martingales (which is Theorem 4.3 of \cite{wong1971}) and 2.) immediate consequence of the definition of $I^n$ (from Proposition~\ref{prop-KS}) and  the fact that $I^n$  is an $\calF_t(X^n)$--adapted Brownian motion.   Now, the second part follows from the definition of $I^n$.
\end{proof}

\begin{remark} Note that $\widetilde W^n=I^n$ is a different Brownian motion (compared to $W$) which is adapted to a smaller filtration  $\calF_t(X^n)$ instead of $\calF_t(W,B)$. The advantage of the above representation is that the dependence of $Y^n$ is only through $\pi^n_t(\D y)$. 
\end{remark}

To summarize, in the above we have shown that instead of using the evolution  of  the pair $(X^n,Y^n)$ (given by~\eqref{eq-X}--\eqref{eq-Y}) to infer the evolution of  $X^n$, we can use the evolution of pair $(X^n,\pi^n)$ which is given as follows: for every $f\in \cC^2(\RR^p\times \RR^q)$ that satisfies~\eqref{eq-KS-cond},
\begin{align}\nonumber
\D X^n_t&=\int_{\RR^q} b(X^n_t,y)\pi^n_t(\D y)\D t +\sigma(X^n_t) \D I^n_t,\quad X^n_0=\bar x, \quad \text{$\PP$--a.s.}\\\nonumber
\pi^n_{t}[f(X^n_t,\cdot)]&=f(\bar x,\bar y)+{n}\int_0^t \pi_{r}^n\big[\mathfrak L^{X^n_r} [f(X^n_r,\cdot)]\big]\D r\\\nonumber
 &\quad + \int_0^t \left(\pi^n_{r}[f(X^n_r,\cdot)b^\dagger(X^n_r,\cdot)]-\pi_{r}^n[f(X^n_r,\cdot))]\pi^n_{r}[b^\dagger(X^n_r,\cdot)]\right)\D I^n_r, \quad \text{$\PP$--a.s.}
\end{align}

We are now in a position to state our main results of our paper.
\begin{theorem}\label{thm-main}[Strong convergence estimate] Suppose Assumptions~\ref{assump-regularity} and~\ref{assump-stab} hold. Then,  the equation  
\begin{align}\label{eq-limit}
\D   X^{*,n}_t= \bar b( X^{*,n}_t)\D t + \sigma( X^{*,n}_t) \D  I^n_t, \, X^{*,n}_0=\bar x
\end{align} 
with $ \bar b(x)= \int_{\RR^q}b(x,y)\pi^{*,x}(\D y)$ has a unique strong solution. Moreover, the following hold: for every $T>0$ and $1<m<2$,
\begin{align}\label{eq-thm-est-1}
\sup_{0\leq t\leq T}\E\Big[\| X^n_t- X^{*,n}_t\|^2\Big]&\leq C n^{-1}\\\label{eq-thm-est-2}
\E\Big[\sup_{0\leq t\leq T}\| X^n_t- X^{*,n}_t\|^m\Big]&\leq C n^{-\frac{m}{2}},
\end{align}
for some $C=C(T,m)>0$.
\end{theorem}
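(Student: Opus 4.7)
The plan is to exploit the filtering representation \eqref{eq-representation} together with the key weighted Lipschitz distance estimate from Section~\ref{sec-filter} (which should yield, under Assumptions~\ref{assump-regularity}--\ref{assump-stab}, a control of the form $\E\|E^n_r\|^2 \leq C n^{-1}$ for the filtering error $E^n_r \df \int b(X^n_r,y)\pi^n_r(\D y) - \int b(X^n_r,y)\pi^{*,X^n_r}(\D y)$).

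\textbf{Step 1 (existence/uniqueness of $X^{*,n}$).} I would first verify that $\bar b(x) = \int b(x,y)\pi^{*,x}(\D y)$ is Lipschitz in $x$. Boundedness of $b$ (Assumption~\ref{assump-regularity}(i)) together with Lipschitzness of $b$ in the first argument handles the variation of the integrand; the variation of the measure $\pi^{*,x}$ with respect to $x$ should be controlled via the Lipschitz-in-parameter estimate on invariant measures derived earlier in Section~\ref{sec-filter}. Combined with Lipschitzness and linear growth of $\sigma$, classical SDE theory (driven by the $\calF_t(X^n)$-Brownian motion $I^n$) delivers a unique strong solution to \eqref{eq-limit}.

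\textbf{Step 2 ($L^2$ estimate \eqref{eq-thm-est-1}).} I would apply It\^o's formula to $\|X^n_t - X^{*,n}_t\|^2$ using \eqref{eq-representation} and \eqref{eq-limit} (both driven by the \emph{same} $I^n$, so the martingale part involves only $\sigma(X^n_r) - \sigma(X^{*,n}_r)$). Taking expectations and writing the drift difference as
\begin{align*}
\int b(X^n_r,y)\pi^n_r(\D y) - \bar b(X^{*,n}_r) = E^n_r + \big(\bar b(X^n_r) - \bar b(X^{*,n}_r)\big),
\end{align*}
I would use Cauchy--Schwarz on the $E^n_r$ term (together with the filtering bound $\E\|E^n_r\|^2 \leq C n^{-1}$) and the Lipschitz property of $\bar b$ and of $\sigma$ on the remaining terms. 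This yields
\begin{align*}
\E\|X^n_t - X^{*,n}_t\|^2 \leq C n^{-1} + C\int_0^t \E\|X^n_r - X^{*,n}_r\|^2\, \D r,
\end{align*}
and Gronwall's inequality closes the $L^2$ estimate.

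\textbf{Step 3 (sup-$L^m$ estimate \eqref{eq-thm-est-2}).} From the mild formulation of $X^n_t - X^{*,n}_t$, the triangle inequality splits the task into the drift part, bounded by $T^{m-1}\int_0^T \E\|D^n_r\|^m\, \D r$ via Jensen, and the stochastic integral, bounded via Burkholder--Davis--Gundy by $C\E\bigl[\bigl(\int_0^T \|\sigma(X^n_r)-\sigma(X^{*,n}_r)\|^2 \D r\bigr)^{m/2}\bigr]$. Since $m/2 < 1$, the concavity of $x \mapsto x^{m/2}$ and Jensen's inequality pull the expectation inside, reducing both bounds to $(\int_0^T \E\|X^n_r - X^{*,n}_r\|^2\, \D r)^{m/2}$ plus the filtering-error contribution $(\int_0^T \E\|E^n_r\|^2\, \D r)^{m/2}$; both are $O(n^{-m/2})$ by Step 2 and the filtering lemma. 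This restriction $m<2$ is precisely what allows us to bypass the need for 4th moments in the BDG bound.

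\textbf{Main obstacle.} The analytical heart of the argument is not this theorem itself, which is a clean Gronwall-plus-BDG exercise once the right pieces are in place; the real difficulty lies upstream in the filtering estimate $\E\|E^n_r\|^2 \leq C n^{-1}$, which in turn rests on controlling the weighted Lipschitz-type distance between $\pi^n_r$ and $\pi^{*,X^n_r}$ via the dual operator norm of $\mathfrak{L}^{X^n_r}$. The subtlety in the present proof is then just keeping track of the constants uniformly in $n$ (since $\bar b$, $\sigma$, and their Lipschitz constants are $n$-independent, and the filtering rate is driven by the $\sqrt n$ speedup in \eqref{eq-Y}), and handling the $m/2<1$ concavity step carefully so that the sup-estimate inherits the optimal $n^{-m/2}$ rate rather than a degraded one.
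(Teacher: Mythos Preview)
Your Step~1 matches the paper. The gap is in what you assume Section~\ref{sec-filter} delivers. You posit a \emph{pointwise} bound $\E\|E^n_r\|^2\le Cn^{-1}$, but this is false at $r=0$: since $\pi^n_0=\delta_{\bar y}$, one has $E^n_0=b(\bar x,\bar y)-\int b(\bar x,y)\pi^{*,\bar x}(\D y)$, a nonzero constant independent of $n$. What the paper actually proves is a \emph{time-integrated} estimate: Proposition~\ref{prop-est} bounds $\E\big[Z\,\rho(\pi^n,\pi^{*,X^n})\big]\le Cn^{-1}$, where $\rho$ is built from $\sup_f\big|\int_0^T(\pi^n_r[f]-\pi^{*,X^n_r}[f])\D r\big|$, and Corollary~\ref{cor-b-est} specializes this (with $Z=\sup_s(\|X^n_s\|+\|X^{*,n}_s\|)$) to control the cross term
\[
\E\Big[\Big|\int_0^t\langle X^n_s-X^{*,n}_s,\Delta^n_s\rangle\,\D s\Big|\Big]\le Cn^{-1}
\]
\emph{directly}, without ever separating $\|\Delta^n_s\|$ via Cauchy--Schwarz. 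This works because the prefactor $X^n_s-X^{*,n}_s$ vanishes at $s=0$, absorbing the initial layer. With this replacement, your It\^o-plus-Gronwall scheme in Step~2 is exactly what the paper does.

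Your Step~3 also inherits the gap: the drift contribution in your BDG scheme requires $\int_0^T\E\|E^n_r\|^2\,\D r\le Cn^{-1}$ (or its $L^m$ variant), which again is not available from the integrated $\rho$-bound. The paper takes a different route entirely: it runs the same It\^o computation up to the stopping time $\tau^n_l=\inf\{t:\sup_{s\le t}\|X^n_s-X^{*,n}_s\|\ge l\}$, extracts the tail estimate $\PP(\sup_{t\le T}\|X^n_t-X^{*,n}_t\|\ge l)\le \widehat C'/(nl^2)$, and then recovers the $L^m$ bound for $1<m<2$ via the layer-cake formula $\E[A_T^m]=m\int_0^\infty l^{m-1}\PP(A_T\ge l)\,\D l$. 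The restriction $m<2$ enters here as the integrability condition for $l^{m-3}$ at infinity, not through concavity in a BDG step.
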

\begin{remark}\label{rem-sup-est}{Since $b$ and $\bar b$ are  bounded and Lipschitz (due to Assumption~\ref{assump-regularity}(i)-(ii) and  Lemma~\ref{lem-lipschitz}, respectively), $\sigma$ is Lipschitz with linear growth (due to Assumption~\ref{assump-regularity}(i) and (iii)), it is well known (see~\cite[Theorem 5.2.9]{karatzas1991brownian} for instance) that processes $X^n$ and $X^{*,n}$ which are solutions of~\eqref{eq-X}--\eqref{eq-Y} and~\eqref{eq-limit}, respectively are such that 
\begin{align}\label{eq-moment-est} \E\big[\sup_{0\leq t\leq T}\|X^n_t\|^2\big], \E\big[\sup_{0\leq t\leq T} \|X^{*,n}_t\|^2\big]\leq M(1+ \|\bar x\|^2 +\|\bar y\|^2) e^{M T}, \text{ for $0\leq t\leq T$},\end{align}
for some constant $M>0$ possibly depending on $T$.}
\end{remark}
Let the process $X^*$ be the unique {strong solution (whose existence follows from same arguments as those used for~\eqref{eq-limit})} to 
\begin{align}\label{eq-limit-*}
\D   X^{*}_t= \bar b( X^{*}_t)\D t + \sigma( X^{*}_t) \D  W_t, \, X^{*}_0=\bar x
\end{align} 
\begin{theorem}\label{thm-main-weak}[Weak convergence estimate] {Fix $0<\theta<1$} and suppose Assumptions~\ref{assump-regularity} and~\ref{assump-stab} hold. Then for every $T>0$ and {$\phi\in \cC^{2,\theta}_b(\RR^p)$}, the following holds:
\begin{align}
\sup_{0\leq t\leq T}\Big|\E\Big[\phi( X^n_t)\Big]- \E\Big[\phi(X^*_t)\Big]\Big|\leq C{\|\phi\|_{2,\theta,\infty}}n^{-1},
\end{align}
for some {$C=C(T)>0$}. Here, the process $X^*$ is as defined in~\eqref{eq-limit-*}.
\end{theorem}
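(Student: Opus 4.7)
First, since the averaged SDEs~\eqref{eq-limit} and~\eqref{eq-limit-*} share identical (Lipschitz, uniformly elliptic) coefficients and differ only in the driving Brownian motion, weak uniqueness gives $\E[\phi(X^*_t)] = \E[\phi(X^{*,n}_t)]$, so the task reduces to bounding $|\E[\phi(X^n_t)] - \E[\phi(X^{*,n}_t)]|$. Fix $t \in (0,T]$ and let $v : [0,t] \times \RR^p \to \RR$ be the classical solution of the backward Kolmogorov equation
\begin{equation*}
\partial_s v(s,x) + \bar b(x) \cdot \nabla_x v(s,x) + \tfrac{1}{2}\text{Tr}\!\big(\sigma(x)\sigma^\dagger(x) D_x^2 v(s,x)\big) = 0, \qquad v(t,\cdot) = \phi,
\end{equation*}
so that $v(0,\bar x) = \E[\phi(X^*_t)]$ by the Feynman--Kac formula. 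Because $\phi \in \calC^{2,\theta}_b(\RR^p)$, $\bar b$ is bounded and Lipschitz (Lemma~\ref{lem-lipschitz}), and $\sigma$ is Lipschitz with linear growth and uniformly elliptic, standard parabolic regularity yields $\|\nabla_x v\|_\infty + \|D_x^2 v\|_\infty + \|\partial_s \nabla_x v\|_\infty \leq C\|\phi\|_{2,\theta,\infty}$. Applying It\^o's formula to $v(s, X^n_s)$ along the filtered representation~\eqref{eq-representation} and using the PDE above, taking expectations gives
\begin{equation*}
\E[\phi(X^n_t)] - \E[\phi(X^*_t)] = \E\Big[\int_0^t \big(\pi^n_s[b(X^n_s,\cdot)] - \bar b(X^n_s)\big) \cdot \nabla_x v(s, X^n_s)\,\D s\Big]. \tag{$\ast$}
\end{equation*}
Since $\bar b(x) = \pi^{*,x}[b(x,\cdot)]$, the integrand has zero $\pi^{*,X^n_s}$-mean, which sets up the Poisson-equation cancellation.

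To extract the $n^{-1}$ decay, for each $x \in \RR^p$ and each $i = 1,\dots,p$ let $\chi_i(x,\cdot) \in \cZ$ be the unique $\pi^{*,x}$-centered solution of
\begin{equation*}
\mathfrak{L}^x \chi_i(x,\cdot) = -\big(b_i(x,\cdot) - \bar b_i(x)\big),
\end{equation*}
which admits the probabilistic representation $\chi_i(x,y) = \int_0^\infty \E[b_i(x, Y^x_s) - \bar b_i(x) \mid Y^x_0 = y]\,\D s$. Uniform-in-$x$ exponential ergodicity (Assumption~\ref{assump-stab}) combined with Lipschitz smoothness of $b, h, \eta$ and differentiability of $\nabla_y \eta$ (Assumption~\ref{assump-regularity}) yields $\cV$-weighted bounds of the form $|\chi_i| + |\nabla_x \chi_i| + |\nabla_y \chi_i| + |D^2_x \chi_i| + |D^2_y \chi_i| \leq C(1 + \cV(y))$, obtained by differentiating the above integral representation under exponential mixing. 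Set $\Psi(s,x,y) \doteq \chi(x,y) \cdot \nabla_x v(s,x)$, so that $\mathfrak{L}^x \Psi(s,x,y) = -(b(x,y) - \bar b(x)) \cdot \nabla_x v(s,x)$. Applying Proposition~\ref{prop-KS} to $\Psi$ (with the factor $n$ in front of the fast generator reinstated) and rearranging yields
\begin{align*}
\int_0^t \pi^n_s\big[\mathfrak{L}^{X^n_s}\Psi(s,X^n_s,\cdot)\big]\,\D s
&= \tfrac{1}{n}\Big\{\pi^n_t[\Psi(t,X^n_t,\cdot)] - \Psi(0,\bar x,\bar y)\\
&\qquad - \int_0^t \pi^n_s\big[(\partial_s + \widehat{\mathfrak{L}}^{X^n_s,\cdot})\Psi(s,X^n_s,\cdot)\big]\,\D s\Big\} - \tfrac{1}{n}M^n_t,
\end{align*}
where $M^n$ is a mean-zero $\calF_t(X^n)$-martingale. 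Substituting into $(\ast)$ and taking expectations, the $\calC^{2,\theta}$ bounds on $v$, the $\cV$-weighted bounds on $\chi$ and its derivatives, and the moment estimate $\sup_{0\le s\le T}\E[\cV(Y^n_s)] \leq \cV(\bar y) + \beta_0 T$ of Remark~\ref{rem-k-moment} make each bracketed term $O(\|\phi\|_{2,\theta,\infty})$ uniformly in $n$ and $t \in [0,T]$, producing the claimed $n^{-1}$ rate.

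The main technical hurdle is establishing the $\cV$-weighted joint regularity of the corrector $\chi(x,y)$. While existence and exponential decay of $\chi_i(x,\cdot)$ at fixed $x$ follow almost directly from Assumption~\ref{assump-stab}, differentiating the integral representation in $x$ requires $\cV$-controlled sensitivity estimates for the stochastic flow $x \mapsto Y^x_s$ that survive long-time integration. This is exactly the weighted Lipschitz machinery developed elsewhere in the paper (compare Lemma~\ref{lem-lipschitz-inv-meas}, which gives the $z$-regularity of $\pi^{*,z}$), and I would adapt that argument to the corrector. Once those bounds are in hand, the remainder of the proof reduces to bookkeeping bounded expectations against the uniform fast-variable moment bound, together with the verification of the integrability hypotheses~\eqref{eq-KS-cond} needed to apply Proposition~\ref{prop-KS} to $\Psi$.
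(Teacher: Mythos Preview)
Your derivation of $(\ast)$ via the backward Kolmogorov equation $v(s,x)=P^*_{t-s}[\phi](x)$ matches the paper's argument exactly. The divergence is in how you extract the $n^{-1}$ rate from $(\ast)$. The paper does \emph{not} introduce a direct corrector $\chi$; instead it bounds the right-hand side of $(\ast)$ by $\sup_{r}\|\nabla P^*_{r}[\phi]\|_\infty\cdot\max\{\text{Lip}(b),\|b\|_*\}\cdot\E[\rho(\pi^n,\pi^{*,X^n})]$ and invokes Proposition~\ref{prop-est} (already established for Theorem~\ref{thm-main}) to get $\E[\rho(\pi^n,\pi^{*,X^n})]\le Cn^{-1}$. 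A Schauder estimate of Lorenzi then gives $\|\nabla P^*_{r}[\phi]\|_\infty\le K\|\phi\|_{2,\theta,\infty}$, finishing the proof in a few lines with no new corrector analysis.

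Your route is the classical Pardoux--Veretennikov / R\"ockner--Xie corrector method, and it is structurally sound, but there is a genuine gap under the paper's hypotheses. To apply Proposition~\ref{prop-KS} to $\Psi(s,x,y)=\chi(x,y)\cdot\nabla_xv(s,x)$ you need $\Psi\in\cC^2(\RR^p\times\RR^q)$, hence $D_x^2\chi$ must exist and be $\cV$-controlled. But Assumption~\ref{assump-regularity} provides only Lipschitz regularity of $b,h,\eta$ in $x$ (plus one $y$-derivative of $\eta$), and the paper explicitly flags this obstruction: the remark preceding~\eqref{eq-PE-p} states that twice-differentiability in $z$ of the unperturbed Poisson solution ``cannot be ascertained'' from~\eqref{eq-PE-o}, which is precisely why the artificial-viscosity regularization $\veps\nabla_z^2$ is introduced. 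Your proposed adaptation of ``the weighted Lipschitz machinery'' (Lemma~\ref{lem-lipschitz-inv-meas}) only delivers first-order regularity of $\pi^{*,x}$ in $x$, not the second-order control your $\widehat{\mathfrak L}^{X^n_s,\cdot}\Psi$ term requires. So either you must strengthen the coefficient assumptions to $C^2$ in $x$---which undercuts the paper's stated goal of working under Lipschitz conditions---or you must regularize $\chi$ in $x$, which is exactly what the paper has already done inside Proposition~\ref{prop-est} via $V_f$ and then packaged into the $\rho$-estimate so that the weak-convergence proof can simply quote it.
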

\begin{remark}The convergence rates obtained in Theorems~\ref{thm-main} and~\ref{thm-main-weak} are known to be optimal; see \cite[Section 2]{liu2010} and  we do not impose any regularity of coefficients other than  Lipschitz continuity and linear growth. As for boundedness, we only impose boundedness of $b$ {and $\sup_{z\in \RR^p}\|h(z,y_0)\|$, for some $y_0\in \RR^q$}. This is in contrast to most of the existing literature where the coefficients are often assumed to be differentiable more than once.
\end{remark}

\section{Key intermediate results }\label{sec-filter}
In this section, we collect all the intermediate results that are necessary for the proof of Theorem~\ref{thm-main}. These results include the Lipschitz continuity of  $\bar b$ (from Theorem~\ref{thm-main}) and the estimate of the ``closeness" between $\pi^n_t$ and $\pi^{*,X^n_t} $, for $0\leq t\leq T$, in terms of $n$. 
{\subsection{Continuity of $\overline b$} In this section, we obtain  the Lipschitz continuity of $\pi^{*,z}$ in terms of $z$ as an immediate consequence of existing results in the literature. This consequently also implies the Lipschitz continuity of $\bar b(\cdot)$. 
\begin{lemma}  \label{lem-lipschitz-inv-meas}Under Assumptions~\ref{assump-regularity} and~\ref{assump-stab}, there exists a constant $ L_{\text{inv}}>0$ such that 
\begin{align*}
\| \pi^{*,x_1}-\pi^{*,x_2}\|_{\text{TV}}\leq L_{\text{inv}}\|x_1-x_2\|,
\end{align*}
for every $x_1,x_2\in \RR^p$. Here, $\|\cdot\|_{\text{TV}}$ denotes the total variation norm. 
\end{lemma}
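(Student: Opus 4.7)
The plan is to obtain this as a direct consequence of the quantitative stability result for invariant measures of elliptic diffusions due to Bogachev, Kirillov, and Shaposhnikov, namely \cite[Theorem 2.2]{bogachev2018}. That theorem provides a bound on the total variation distance between the invariant measures of two time-homogeneous elliptic diffusions in terms of weighted norms of the differences of their drift and diffusion coefficients, provided both diffusions share uniform ellipticity, sufficient regularity, and a common Lyapunov function.

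The first step is to identify $\pi^{*,x_1}$ and $\pi^{*,x_2}$ as invariant measures of the frozen diffusions with generators $\mathfrak L^{x_1}$ and $\mathfrak L^{x_2}$, respectively; these exist uniquely by Remark~\ref{rem-inv-meas}. (Note that $n\mathfrak L^{z}$ and $\mathfrak L^{z}$ share the same invariant measure, so it suffices to work with $n=1$.) The second step is to verify the hypotheses of \cite[Theorem 2.2]{bogachev2018}: uniform ellipticity of $\eta\eta^\dagger$ follows from Assumption~\ref{assump-regularity}(iii); boundedness of $\eta$ and Lipschitz continuity of $h$ and $\eta$ come from Assumption~\ref{assump-regularity}(i)--(ii); the differentiability of $\eta$ in $y$ together with the uniform-in-$y$ Lipschitz property of $\nabla_y\eta$ in $x$ is provided by Assumption~\ref{assump-regularity}(ii); and Assumption~\ref{assump-stab} (together with Remark~\ref{rem-k-moment}) furnishes a common Lyapunov function $\cV(y)=\langle y,Ay\rangle$ satisfying $\mathfrak L^z \cV\le \beta_0-\beta_1\cV$ uniformly in $z\in\RR^p$, with the uniform moment bound $\sup_z \pi^{*,z}[\cV]<\beta_0/\beta_1$ controlling the integrability of the coefficient perturbations against the invariant measures.

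Third, applying \cite[Theorem 2.2]{bogachev2018} to the pair $(\mathfrak L^{x_1},\mathfrak L^{x_2})$ yields an estimate of the form
$$ \|\pi^{*,x_1}-\pi^{*,x_2}\|_{\text{TV}}\le C\Big(\sup_{y\in\RR^q}\|h(x_1,y)-h(x_2,y)\|+\sup_{y\in\RR^q}\|\eta(x_1,y)-\eta(x_2,y)\|\Big),$$
with $C>0$ depending only on the constants $L,\delta,\delta_0,\delta_1$ in Assumptions~\ref{assump-regularity} and~\ref{assump-stab} and on $\text{Lip}(\nabla_y\eta)$. The Lipschitz continuity of $h$ and $\eta$ in $x$, uniformly in $y$ (Assumption~\ref{assump-regularity}(ii)), then bounds the right-hand side by $L_{\text{inv}}\|x_1-x_2\|$ with $L_{\text{inv}}\doteq C(\text{Lip}(h)+\text{Lip}(\eta))$, completing the argument.

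The only genuine obstacle is matching the exact form of the perturbation norm used in \cite[Theorem 2.2]{bogachev2018}: if the relevant norm is of integral type (such as an $L^p(\pi^{*,x_i})$ norm rather than a uniform one), then the uniform-in-$y$ suprema above are replaced by integrals, and one then uses the uniform Lipschitz estimates together with the moment bound from Remark~\ref{rem-k-moment} to arrive at the same conclusion. In either case, the proof reduces to a verification of hypotheses and a direct reading-off of the resulting Lipschitz constant.
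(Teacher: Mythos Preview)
Your proposal is correct and follows essentially the same route as the paper: both invoke \cite[Theorem 2.2]{bogachev2018} and then use the Lipschitz assumptions on $h$, $\eta$, and $\nabla_y\eta$ together with the uniform moment bound from Remark~\ref{rem-k-moment} to extract the Lipschitz estimate in $\|x_1-x_2\|$. The only refinement in the paper's version is that the Bogachev bound is of the integral type you anticipate in your last paragraph---the perturbation function $\Phi$ carries a term $(\eta\eta^\dagger(x_1,\cdot)-\eta\eta^\dagger(x_2,\cdot))\nabla p_\infty(x_1,\cdot)/p_\infty(x_1,\cdot)$, and controlling the logarithmic gradient requires \cite[Remark~2.2(ii)]{bogachev2018} after a Cauchy--Schwarz step; your contingency covers exactly this.
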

\begin{proof} Fix $x_1,x_2\in \RR^p$. From Assumption~\ref{assump-regularity} and  \cite[Theorem 2.6.16]{arapostathis2011}, $\pi^{*,x_1}$ admits a density denoted by $p_\infty(x_1, \cdot)$. Define, 
\begin{align*} \Phi (x_1,x_2,y)&\doteq \frac{\big(\eta(x_1,y)\eta(x_1,y)^\dagger\big)-\big(\eta(x_2,y)\eta(x_2,y)^\dagger\big)\nabla p_\infty (x_1,y)}{p_\infty(x_1,y)} \\
&\qquad+ h(x_1,y)-h(x_2,y) -\nabla_y \eta (x_1,y)+ \nabla_y\eta(x_2,y)\,.\end{align*}
From Lipschitz property of $h$, $\eta$ and $\nabla_y\eta$, and the boundedness of $\eta$, we have
\begin{align}\label{eq-phi-est} \| \Phi(x_1,x_2,y)\|\leq C\|x_1-x_2\| \Big(1+ \frac{\|\nabla p_\infty(x_1,y)\|}{p_\infty(x_1,y)}\Big), \text{ for some $C>0$}\,.\end{align}
Applying \cite[Theorem 2.2]{bogachev2018} for $m=1$, we have
\begin{align*}
\|\pi^{*,x_1}-\pi^{*,x_2}\|_{\text{TV}}&\leq C \int_{\RR^q} (1+\|y\|)| \Phi(x_1,x_2,y)| \pi^{*,x_1}(\D y)\\
&\leq C \Big(\int_{\RR^q} (1+\|y\|)^2\pi^{*,x_1}(\D y)\Big)^{\frac{1}{2}}\Big(\int_{\RR^q}  | \Phi(x_1,x_2,y)|^2 \pi^{*,x_1}(\D y)\Big)^{\frac{1}{2}}\\
&\leq \frac{C\beta_0}{\beta_1} \|x_1-x_2\|, 
\end{align*} 
for some $C>0$. To get the last line, we use~\eqref{eq-phi-est} and \cite[Remark 2.2(ii)]{bogachev2018}. This completes the proof with $L_{\text{inv}}\doteq C\beta_0\beta_1^{-1}$.
\end{proof}

\begin{lemma} \label{lem-lipschitz}Under Assumptions~\ref{assump-regularity} and~\ref{assump-stab}, $\bar b$ is Lipschitz \emph{i.e.,} for a constant $\text{Lip}(\bar b)>0$,
\begin{align*}
\|\bar b(x_1)-\bar b(x_2)\|\leq \text{Lip}(\bar b)\|x_1-x_2\|,\text{ for every $x_1,x_2\in \RR^p$.}
\end{align*}

\end{lemma}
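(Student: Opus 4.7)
The plan is to split $\bar b(x_1)-\bar b(x_2)$ into two pieces by adding and subtracting a mixed term $\int_{\RR^q} b(x_2,y)\,\pi^{*,x_1}(\D y)$, and then bound each piece using ingredients already at hand, namely Assumption~\ref{assump-regularity}(i)--(ii) (boundedness and Lipschitz continuity of $b$) and the newly proved Lemma~\ref{lem-lipschitz-inv-meas} (Lipschitz continuity of $z\mapsto \pi^{*,z}$ in total variation).

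More precisely, I would write
\begin{align*}
\bar b(x_1)-\bar b(x_2) &= \int_{\RR^q}\bigl(b(x_1,y)-b(x_2,y)\bigr)\pi^{*,x_1}(\D y) + \int_{\RR^q} b(x_2,y)\bigl(\pi^{*,x_1}-\pi^{*,x_2}\bigr)(\D y).
\end{align*}
For the first integral, the integrand is pointwise bounded by $\text{Lip}(b)\|x_1-x_2\|$ (Lipschitz continuity of $b$ from Assumption~\ref{assump-regularity}(ii)), so the term is at most $\text{Lip}(b)\|x_1-x_2\|$ after integrating against the probability measure $\pi^{*,x_1}$. For the second integral, the boundedness of $b$ (Assumption~\ref{assump-regularity}(i), namely $\|b\|_\infty\leq L$) lets me dominate it by $L\,\|\pi^{*,x_1}-\pi^{*,x_2}\|_{\text{TV}}$, which by Lemma~\ref{lem-lipschitz-inv-meas} is at most $L\,L_{\text{inv}}\|x_1-x_2\|$. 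Combining the two bounds yields the claim with $\text{Lip}(\bar b) = \text{Lip}(b)+L\,L_{\text{inv}}$.

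There is no real obstacle here: the work has already been done in establishing boundedness of $b$, Lipschitz continuity of $b$ in the slow variable, and the total variation Lipschitz estimate in Lemma~\ref{lem-lipschitz-inv-meas}. The only minor point to verify is that the measurability/integrability required to apply the triangle inequality through the integral is automatic, which follows from $b$ being bounded and $\pi^{*,x_i}$ being probability measures. Hence the proof is essentially a one-line application of the add-and-subtract trick plus Lemma~\ref{lem-lipschitz-inv-meas}.
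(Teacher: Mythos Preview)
Your proposal is correct and follows exactly the same approach as the paper: add and subtract $\int_{\RR^q} b(x_2,y)\pi^{*,x_1}(\D y)$, bound the first term via the Lipschitz constant of $b$, and bound the second via $\|b\|_\infty$ together with Lemma~\ref{lem-lipschitz-inv-meas}, arriving at $\text{Lip}(\bar b)=\text{Lip}(b)+\|b\|_\infty L_{\text{inv}}$.
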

\begin{proof} For every $x_1,x_2\in \RR^p$,  consider
\begin{align*}
&\Big\|\int_{\RR^q} b(x_1,y)\pi^{*,x_1}(\D y)- \int_{\RR^q} b(x_2,y)\pi^{*,x_2}(\D y)\Big\|\\
&\leq  \int_{\RR^n} \|b(x_1,y)- b(x_2,y)\|\pi^{*,x_1}(\D y)+\Big\|\int_{\RR^q} b(x_2,y)\pi^{*,x_1}(\D y)- \int_{\RR^n} b(x_2,y)\pi^{*,x_2}(\D y)\Big\|\\
&\leq \text{Lip}(b)\|x_1-x_2\| + \Big\|\int_{\RR^q} b(x_2,y)\pi^{*,x_1}(\D y)- \int_{\RR^q} b(x_2,y)\pi^{*,x_2}(\D y)\Big\|\\
&\leq \text{Lip}(b)\|x_1-x_2\| + \|b\|_\infty L_{\text{inv}}\|x_1-x_2\| \,.
\end{align*}
Setting $\text{Lip}(\bar b)\doteq \text{Lip}(b)+ \|b\|_\infty L_{\text{inv}} $, we have the result.
\end{proof}}
\subsection{{A weighted Lipschitz distance like function and an associated estimate}}
In this section, we prove a crucial lemma {(see Lemma~\ref{lem-est})} that provides an upper bound on a type of  weighted {Lipschitz distance like function   between a generic one-parameter family of measures $\bar \pi\doteq \{\bar \pi_t:0\leq t\leq T\}$ and the family of  unique invariant measures (of the ``frozen" diffusion parametrized by a path $\gamma:[0,T]\rightarrow\RR^p$) $\pi^{*,\gamma}\doteq \{\pi^{*,\gamma_t}:0\leq t\leq T\}$  in terms of $\int_0^T\bar \pi\big[\mathfrak{L}^{\gamma_t} [g]\big]\D t$}, where $g$ lies in an appropriate class of functions.  This lemma then helps us  analyze $ \pi^n_t(\D y)$ and its behavior as $n\to \infty$ {in Proposition~\ref{prop-est}}.  

From~\eqref{eq-KS}, it is clear that if terms other than ${n}\int_0^t \pi_{r}^n\big[\mathfrak L^{X^n_r} [f(X^n_r,\cdot)]\big]\D r$ are uniformly bounded in $n$ (in an appropriate sense), then $\int_0^t \pi_{r}^n\big[\mathfrak L^{X^n_r} [f(X^n_r,\cdot)]\big]\D r$ is small for every $0\leq  t\leq T$ for large $n$ and in particular, $\pi_{r}^n\big[\mathfrak L^{X^n_r} [f(X^n_r,\cdot)]\big]$ approaches zero as $n\to\infty$. This means that $\pi^n_t$ approaches $\pi^{*,X^n_t}$ as the condition:  $\pi^{*,z}\big[\mathfrak L^{z} [f]\big]=0$ (for any appropriately generic function $f$) completely characterizes $\pi^{*,z}$; see Remark~\ref{rem-inv-meas}. However, we are interested in quantifying this limiting behavior of $\pi^n_t$. This is the content of the rest of this section. We believe the estimates given below can be of  independent interest and therefore we obtain these results in a more general setup.

To begin with,  { we introduce a weighted supremum norm  for functions on $\RR^q$: for $f:\RR^q\rightarrow \RR$, 
$$\|f\|_*=\sup_{y\in \RR^q} \frac{|f(y)|}{1+ \cV(y)}\,.$$
With slight abuse of notation, if $g:\RR^q\rightarrow \RR^q$, we still use $\|g\|_*$ to denote $\sup_{y\in \RR^q} \frac{\|g(y)\|}{1+ \cV(y)}$. Now,  let $\cX$  be the set of continuously differentiable functions $f$ on $ \RR^q$, where two functions are considered identical  if their difference is a constant. 

For $z\in \RR^p$, $t\geq 0$ and $f\in \cX$, define, \begin{align}\label{eq-H} H^f_t( z,y)\doteq \E\Big[f(Y^{ z}_t)-\pi^{*,z}[f]\big| Y^{z}_0=y \Big]\,.\end{align}
\begin{remark}\label{rem-ergodic} For $k\geq 1$, define $\|f\|_{*,k}=\sup_{y\in \RR^q} \frac{|f(y)|}{1+ \cV_k(y)}\,.$ Under Assumptions~\ref{assump-regularity}, invoking \cite[Theorem 2.3 and Proposition 2.4]{lazic2021} we conclude that the process $Y^z$ is \emph{open-set irreducible and aperiodic} (see \cite[Pg. 350]{lazic2021} for their definitions). Then using \cite[Theorem 5.1 and Theorem 7.1]{tweedie1994topological}, we infer that all compacts sets are \emph{petite} (see \cite[Pg. 528]{meyn1993stability} for the definition). Now, from~\ref{assump-stab}, using  the well-known exponential ergodicity result \cite[Theorem 6.1]{meyn1993stability}, we know that 
\begin{align}\label{eq-exp-conv}
\sup_{\stackrel{f\in \cX}{\|f\|_{*,k}\leq 1}} \|H^f(z,\cdot)\|_{*,k} \leq \|f\|_{*,k}C_ke^{-\xi_k t},
\end{align}
for some constants (depending on $k$) $C_k>0$ and $\xi_k>0$ (independent of $z$). We suppress the dependence of $k$ on $(C_k,\xi_k)$ and simply write $(C,\xi)$.
\end{remark}

 Since we intend to estimate the `closeness' between two families of probability measures $\{\pi^n_t:0\leq t\leq T\}$ and $\{\pi^{*,X^n_t}:0\leq t\leq T\}$, we introduce a metric-like function on the family of probability measures as a whole. For a path $\gamma \in \cC([0,T];\RR^p)$ and a family of probability measures $\bar \pi\doteq \{\bar \pi_t: 0\leq t\leq T\}$, set $\pi^{*,\gamma}\doteq \{\pi^{*,\gamma_t}:0\leq t\leq T\}$ and define 
 \begin{align}\label{eq-norm-t}
 \rho (\bar \pi,\pi^{*,\gamma})\doteq \sup_{\stackrel{f\in \cX}{\|f\|_*,\|\nabla f\|_* \leq 1}}\Big| \int_0^T \big(\bar \pi_t[f]- \pi^{*,\gamma_t}[f]\big) \D t\Big|\,.
 \end{align}
 \begin{remark} We use the phrase `metric-like' above to note that we do not necessarily claim that $\rho$ in the above display is a metric.
 \end{remark}
} {To obtain the estimate of $\rho(\bar \pi, \pi^{*,\gamma})$ in terms of $\int_0^t \bar \pi_{r}\big[\mathfrak L^{\gamma_r} [f(X^n_r,\cdot)]\big]\D r$ for $f\in \cX$,  we introduce the well-known Poisson's equation below: for  $z\in \RR^p$ and $f\in \cX$, let $\bar V_f(z,\cdot)\in \cC^2(\RR^q)$ be the unique solution to 
\begin{align} \label{eq-PE-o}
\mathfrak L^z [\bar V_f(z,\cdot)](y)= f(y)-\pi^{*,z}[f]\,.
\end{align}

\begin{remark}Even though $\bar V_f(z,y)$ is twice differentiable in $y$, for every $z\in \RR^p$, to apply Proposition~\ref{prop-KS}, we require $\bar V_f(z,y)$ to be at least twice differentiable in $z$ as well. Unfortunately, we cannot ascertain this kind of regularity in $z$ by  considering~\eqref{eq-PE-o}. To circumvent this issue, we regularize the $\bar V_f$ in very particular manner so that the regularized version of $\bar V_f$  is more amenable to our requirements. \end{remark}

In light  of the above remark, we consider the following `perturbation' of~\eqref{eq-PE-o}:  for $\veps,\lambda>0$ (whose values will be chosen to be appropriately small later),
\begin{align}\label{eq-PE-p}
\veps\nabla^2_z V_f(z,y)+ \mathfrak L^z [V_f(z,\cdot)](y) -\lambda V_f(z,y)= f(y)-\pi^{*,z}[f]\,.
\end{align}
Here, $\nabla^2_z$ denotes the Laplacian with respect to $z$. From \cite[Lemma 3.5.4]{arapostathis2011}, there exists a unique function $V_f\in \cC^2(\RR^p\times \RR^q)$, for every $f\in \cX$ such that $V_f$ satisfies~\eqref{eq-PE-p}.  

From \cite[Section A.3]{arapostathis2011}, we know that $V_f$ can be expressed as follows: 
\begin{align}\label{eq-V-expression}
V_f(z,y)&= - \int_0^\infty  \int_{\RR^p} \frac{e^{-\lambda t}}{\sqrt{4 \veps^p t^p}} e^{-\frac{\|z-\bar z\|^2}{4\veps t}} H^f_t(\bar z,y) \D \bar z\D t\,.
\end{align}
Recall that 
\begin{align}\label{eq-gauss}
\frac{1}{\sqrt {4\veps^p t^p} }\int_{\RR^p}e^{-\frac{\|z-\bar z\|^2}{4\veps t}}  \D \bar z=1\,.
\end{align}
Before we proceed further, we prove important  estimates (in Lemmas~\ref{lem-property}--\ref{lem-est-gradV}) concerning $H_f$ and $V_f$, for  $f\in \cX$  in terms of $\|f\|_*$ and $\|\nabla f\|_*$. { In the following, we denote the semigroup associated with process $Y^z$ by $P^z_t(y,\D \bar y)$. In terms of $P^z_t$, we have $$H^f_t(z,y)=\int_{\RR^q} P^z_t(y,\D\bar y) \big(f(\bar y)-\pi^{*,z}[f]\big)\D \bar y\,.$$
In the next lemma, we collect important regularity properties of $H^f_t(z,y)$ 

\begin{lemma}\label{lem-property} Suppose Assumptions~\ref{assump-regularity} and~\ref{assump-stab} hold. For every $z\in \RR^p$, the semigroup $P^z_t(y,\D \bar y)$ admits a density $p_t(z,y,\bar y)$ that is differentiable in $y$. Moreover, $\nabla^{l_z}\nabla^{l_y}H^f_t(z,y)$ exists for $l_z,l_y=0,1$ and  $(z,y)\in \RR^p\times \RR^q$, and there exists a  constant $C>0$ (independent of $z$ and $t$) such that 
\begin{align*}
\|\nabla_z^{l_z}\nabla_y^{l_y}H^f_t(z,\cdot)\|_*\leq\begin{cases}C\max\big\{\| f\|_*,\|\nabla f\|_*\big\} &\text{ for $0<t\leq 1$},\\
Ce^{-\xi t}\max\big\{\| f\|_*,\|\nabla f\|_*\big\}&\text{ for $t>1$}\,.
\end{cases}
\end{align*}
\end{lemma}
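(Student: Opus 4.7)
I would handle the four combinations $(l_z,l_y)\in\{0,1\}^2$ simultaneously, splitting each into a short-time regime $0<t\le 1$ and a long-time regime $t>1$. The base case $l_z=l_y=0$ is essentially the statement of exponential ergodicity recorded in Remark~\ref{rem-ergodic}, applied to $f-\pi^{*,z}[f]$ and combined with the Lyapunov moment bound from Remark~\ref{rem-k-moment} to pass from the supremum to the weighted $\|\cdot\|_*$ norm. The improvement $e^{-\xi t}$ for $t>1$ in the higher derivative cases will ultimately come from the same ergodicity, funneled through a Markov-semigroup decomposition.

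\textbf{Short time ($0<t\le 1$).} The first step is to obtain a transition density $p_t(z,y,\bar y)$ for $Y^z$ and derivative estimates for it. For this I invoke \cite[Theorem~1.2]{menozzi2021}, whose hypotheses (Lipschitzness of $h$ and $\eta$, uniform ellipticity from Assumption~\ref{assump-regularity}(iii), and the controlled growth/boundedness at a fixed point $y_0$) are exactly those imposed here; it yields Gaussian-type upper bounds together with gradient estimates $|\nabla_y p_t(z,y,\bar y)|+|\nabla_z p_t(z,y,\bar y)|\le C t^{-1/2}g_t(y,\bar y)$, for a Gaussian kernel $g_t$ whose moments against $1+\cV(\bar y)$ are controlled uniformly on $[0,1]\times\RR^p$. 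Using
$$ H^f_t(z,y)=\int_{\RR^q} p_t(z,y,\bar y)\bigl(f(\bar y)-\pi^{*,z}[f]\bigr)\D\bar y $$
one can differentiate under the integral; the singular factor $t^{-1/2}$ is integrable on $[0,1]$, and the extra term $\nabla_z\pi^{*,z}[f]$ arising in the $l_z=1$ case is controlled by $\|f\|_*$ and $\|\nabla f\|_*$ via Lemma~\ref{lem-lipschitz-inv-meas}.

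\textbf{Long time ($t>1$).} Here the key identity is the Markov/semigroup decomposition
$$ H^f_t(z,y)=\int_{\RR^q} p_1(z,y,\bar y)\, H^f_{t-1}(z,\bar y)\,\D\bar y, $$
which follows from the semigroup property and the invariance of $\pi^{*,z}$ under $P^z_1$. For $l_y=1$, the derivative lands on the smooth kernel $p_1$ (bounded by the short-time step), while $H^f_{t-1}$ in $\|\cdot\|_*$ is bounded by $Ce^{-\xi(t-1)}\max\{\|f\|_*,\|\nabla f\|_*\}$ through Remark~\ref{rem-ergodic}; multiplying these yields the desired $e^{-\xi t}$ decay. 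For $l_z=1$, the derivative splits into a piece where $\nabla_z$ hits $p_1$ (handled as above) and a piece where $\nabla_z$ hits $H^f_{t-1}$; the latter I close by a Duhamel iteration, differentiating the Kolmogorov equation $\partial_t H^f=\mathfrak L^z H^f$ in $z$ to obtain a linear equation for $\nabla_z H^f$ with source $(\nabla_z\mathfrak L^z)H^f$ and initial datum $-\nabla_z\pi^{*,z}[f]$. The source decays like $e^{-\xi s}\max\{\|f\|_*,\|\nabla f\|_*\}$ in $s$ (by the already-established $(0,l_y)$ bounds and Lipschitzness of $h,\eta$ in $z$), and the initial datum is controlled via Lemma~\ref{lem-lipschitz-inv-meas}, so Gronwall-type integration against the decaying semigroup preserves the $e^{-\xi t}$ rate.

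\textbf{Expected main obstacle.} The truly delicate point is the long-time $z$-derivative. A direct sensitivity approach based on the flow $\nabla_z Y^z_t$ fails because under mere Lipschitz (not contractive) hypotheses on $h,\eta$ the sensitivity grows in $t$, so one cannot simply write $\nabla_z H^f_t(z,y)=\E[\nabla f(Y^z_t)\cdot\nabla_z Y^z_t]-\nabla_z\pi^{*,z}[f]$ and hope for decay. The fix is precisely the split $t=1+(t-1)$: use short-time heat-kernel smoothing on $[0,1]$ to absorb all derivatives onto the smooth kernel $p_1$, and then transfer the $t$-dependence entirely onto $H^f_{t-1}$ where exponential ergodicity (Assumption~\ref{assump-stab}) supplies the decay. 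Treating the $z$-dependence of $\pi^{*,z}$ separately via Lemma~\ref{lem-lipschitz-inv-meas} keeps the bounds uniform in $z$, which is what the statement demands.
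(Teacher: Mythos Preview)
Your plan coincides with the paper's: short-time Gaussian gradient bounds on $p_t$ from \cite[Theorem~1.2]{menozzi2021}, and for $t>1$ the Chapman--Kolmogorov split at time $1$ so that derivatives land on the smooth kernel $p_1$ while the factor $H^f_{t-1}$ (equivalently $p_{t-1}-p_\infty$, as the paper writes it) supplies the $e^{-\xi t}$ decay via Remark~\ref{rem-ergodic}. The paper's appendix carries this out explicitly for $\nabla_y p_t$ and $\nabla_y^2 p_t$ (the latter needed for the $z$-derivative) and otherwise defers to \cite[Lemmas~3.4--3.5]{rockner2019strongweakconvergenceaveraging}.

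One small slip worth flagging: the remark that ``$t^{-1/2}$ is integrable on $[0,1]$'' is beside the point, since the lemma demands a bound that is \emph{pointwise} in $t$. Differentiating directly under the integral gives only $\|\nabla_y H^f_t\|_*\le Ct^{-1/2}\|f\|_*$, which blows up as $t\downarrow 0$; uniformity on $(0,1]$ requires actually using $\|\nabla f\|_*$, e.g.\ via the centering $\int_{\RR^q}\nabla_y p_t(z,y,\bar y)\,\D\bar y=0$ together with a first-order expansion of $f$ around the deterministic flow $\phi(z,y,t)$, or via the stochastic flow derivative $\nabla_y Y^z_t$ (bounded on $[0,1]$). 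Your Duhamel scheme for $\nabla_z$ is legitimate, but note that the source term $(\nabla_z\mathfrak L^z)H^f_s$ contains $D_y^2 H^f_s$, so you implicitly need the second-order $y$-gradient estimates that the paper also records.
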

The proof of this lemma is given in the appendix.
\begin{lemma}\label{lem-est-Vz} Suppose Assumptions~\ref{assump-regularity} and~\ref{assump-stab} hold. Then, there exists a constant $C>0$ (independent of $z$) such that 
for every  $f\in \cX$ and $z\in \RR^p$,  
\begin{align*} \max\Big\{\|V_f(z,\cdot)\|_*,\|\nabla_y V_f(z,\cdot)\|_*\Big\} \leq C \max\Big\{\frac{1}{\lambda+\xi}, \frac{1-e^{-\lambda}}{\lambda} + \frac{e^{-(\lambda+\xi) }}{\lambda+\xi}  \Big\}\max\big\{\|f\|_*,\|\nabla f\|_*\big\}\,.\end{align*} 
\end{lemma}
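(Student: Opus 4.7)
The plan is to bound $V_f$ and $\nabla_y V_f$ directly using the integral representation \eqref{eq-V-expression}, combined with the pointwise/weighted-norm bounds on $H^f_t$ provided by Lemma~\ref{lem-property} and the normalization property \eqref{eq-gauss} of the Gaussian kernel.

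For the first estimate, I would divide both sides of \eqref{eq-V-expression} by $1+\cV(y)$ and move absolute values inside, obtaining
\begin{align*}
\frac{|V_f(z,y)|}{1+\cV(y)} \leq \int_0^\infty e^{-\lambda t}\int_{\RR^p}\frac{1}{\sqrt{4\veps^p t^p}}e^{-\frac{\|z-\bar z\|^2}{4\veps t}}\,\|H^f_t(\bar z,\cdot)\|_*\,\D\bar z\,\D t.
\end{align*}
By Lemma~\ref{lem-property}, the $\|\cdot\|_*$-norm of $H^f_t(\bar z,\cdot)$ is bounded uniformly in $\bar z$ by $C\max\{\|f\|_*,\|\nabla f\|_*\}$ on $(0,1]$ and by $Ce^{-\xi t}\max\{\|f\|_*,\|\nabla f\|_*\}$ on $(1,\infty)$. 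The inner $\bar z$-integral equals unity by \eqref{eq-gauss}, so the outer integral reduces to a time integral
\begin{align*}
\int_0^1 e^{-\lambda t}\,\D t+\int_1^\infty e^{-(\lambda+\xi) t}\,\D t \;=\; \frac{1-e^{-\lambda}}{\lambda}+\frac{e^{-(\lambda+\xi)}}{\lambda+\xi}.
\end{align*}
Alternatively, I would note that $1\leq e^{\xi}e^{-\xi t}$ for $t\leq 1$, so the small-time bound in Lemma~\ref{lem-property} can be absorbed to yield the uniform estimate $\|H^f_t(\bar z,\cdot)\|_*\leq C'e^{-\xi t}\max\{\|f\|_*,\|\nabla f\|_*\}$ for all $t>0$; the same computation then yields $\int_0^\infty e^{-(\lambda+\xi)t}\,\D t = \frac{1}{\lambda+\xi}$. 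Since either bound is valid (up to the generic constant $C$), taking their maximum produces the estimate claimed for $\|V_f(z,\cdot)\|_*$.

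For $\nabla_y V_f$, the key step is to justify differentiation under the double integral in \eqref{eq-V-expression}. The derivative bounds in Lemma~\ref{lem-property} for $l_y=1$ are of exactly the same form as for $l_y=0$ (with the same exponential-in-$t$ decay for $t>1$ and the same boundedness on $(0,1]$), which provides an $L^1$ dominating function in $(\bar z,t)$, weighted by $(1+\cV(y))$. Dominated convergence then gives
\begin{align*}
\nabla_y V_f(z,y) \;=\; -\int_0^\infty\int_{\RR^p}\frac{e^{-\lambda t}}{\sqrt{4\veps^p t^p}}e^{-\frac{\|z-\bar z\|^2}{4\veps t}}\,\nabla_y H^f_t(\bar z,y)\,\D\bar z\,\D t,
\end{align*}
and the argument from the previous paragraph (applied to $\nabla_y H^f_t$ in place of $H^f_t$) yields the same bound for $\|\nabla_y V_f(z,\cdot)\|_*$.

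The main obstacle I anticipate is ensuring that dominated convergence applies cleanly in the $\nabla_y$ step, in particular that the weighted-norm bound from Lemma~\ref{lem-property} translates into a genuine $L^1$ dominating function in $(\bar z,t)$ after integrating out the Gaussian factor; once this bookkeeping is done, the remaining computations reduce to the two elementary time integrals displayed above.
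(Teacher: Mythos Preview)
Your proposal is correct and follows essentially the same approach as the paper: bound $V_f$ and $\nabla_y V_f$ via the integral representation \eqref{eq-V-expression}, apply the weighted-norm bounds on $H^f_t$ (the paper uses Remark~\ref{rem-ergodic} directly for the $V_f$ part, which is precisely your ``alternative'' observation yielding $\tfrac{1}{\lambda+\xi}$), integrate out the Gaussian via \eqref{eq-gauss}, and split the time integral at $t=1$ for the gradient part using Lemma~\ref{lem-property}. The only cosmetic difference is that the paper does not spell out the dominated-convergence justification for differentiating under the integral, which you correctly flag.
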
 
\begin{proof} Fix $f\in \cX$. From~\eqref{eq-V-expression} and the definition of $H^f$ in~\eqref{eq-H}, we have 
\begin{align*}
\frac{|V_f(z,y)|}{1+\cV(y)}&\leq  \int_0^\infty  \int_{\RR^p} \frac{e^{-\lambda t}}{\sqrt{4 \veps^p t^p}} e^{-\frac{\|z-\bar z\|^2}{4\veps t}} \frac{|H^f(\bar z,y)|}{1+\cV(y)} \D \bar z\D t\,.\end{align*}
From the above display and Remark~\ref{rem-ergodic}, we obtain 
\begin{align}\label{eq-Vf-norm}
\frac{|V_f(z,y)|}{1+\cV(y)}&\leq C\|f\|_*\int_0^\infty\int_{\RR^p} \frac{e^{-(\lambda+\xi) t}}{\sqrt{4 \veps^p t^p}} e^{-\frac{\|z-\bar z\|^2}{4\veps t}}  \D \bar z\D t\leq \frac{C}{\lambda+\xi}  \|f\|_* \,. 
\end{align}
To get the  second inequality, we use~\eqref{eq-gauss}. Now consider 
\begin{align}\nonumber
\frac{\|\nabla_y V_f(z,y)\|}{1+\cV(y)}&=\Big| \int_0^1 \int_{\RR^p} \frac{e^{-\lambda t}}{\sqrt{4 \veps^p t^p}} e^{-\frac{\|z-\bar z\|^2}{4\veps t}} \frac{\nabla_y H^f_t(\bar z ,y)}{1+\cV(y)} \D \bar z\D t\Big| +\Big|\int_1^\infty\int_{\RR^p} \frac{e^{-\lambda t}}{\sqrt{4 \veps^p t^p}} e^{-\frac{\|z-\bar z\|^2}{4\veps t}} \frac{\nabla_y H^f_t(\bar z ,y)}{1+\cV(y)}\D \bar z\D t\Big|\\\nonumber
&\leq   \int_0^1 \int_{\RR^p} \frac{e^{-\lambda t}}{\sqrt{4 \veps^p t^p}} e^{-\frac{\|z-\bar z\|^2}{4\veps t}} \|\nabla_y H^f_t(\bar z,\cdot)\|_*\D \bar z\D t +\int_1^\infty\int_{\RR^p} \frac{e^{-\lambda t}}{\sqrt{4 \veps^p t^p}} e^{-\frac{\|z-\bar z\|^2}{4\veps t}} \|\nabla_y H^f_t(\bar z,\cdot)\|_* \D \bar z\D t\\\nonumber
&\leq C\max\{\|f\|_*,\|\nabla f\|_*\}\Big( \int_0^1 \int_{\RR^p} \frac{e^{-\lambda t}}{\sqrt{4 \veps^p t^p}} e^{-\frac{\|z-\bar z\|^2}{4\veps t}} \D \bar z\D t + \int_1^\infty \int_{\RR^p} \frac{e^{-(\lambda+\xi) t}}{\sqrt{4 \veps^p t^p}} e^{-\frac{\|z-\bar z\|^2}{4\veps t}} \D \bar z\D t \Big)\\\label{eq-gradV}
&\leq  C\max\{\|f\|_*,\|\nabla f\|_*\} \Big(\frac{1-e^{-\lambda}}{\lambda} + \frac{e^{-(\lambda+\xi) }}{\lambda+\xi} \Big)\,.
\end{align}
In the above, to get the second line, we use the definition of $\|\cdot\|_*$; to get the third line, we use Lemma~\ref{lem-property} (for $l_z=0$, $l_y=1$) and to get the fourth line, we again use~\eqref{eq-gauss}. Therefore, from the arbitrariness of $y$ above,~\eqref{eq-Vf-norm} and~\eqref{eq-gradV}, the proof is complete.
\end{proof}

\begin{lemma}\label{lem-est-gradV} For every $z\in \RR^p$ and $f\in \cX$, following  hold: for some constant $C>0$ (independent of $z$),
$$\max\big\{ \|\nabla^2_z\nabla_y V_f(z,y)\|_*, \|\nabla^2_z V_f(z,y)\|_*\big\}\leq  C\max\big\{\|f\|_*,\|\nabla f\|_*\big\} \big(\frac{1}{\sqrt \veps } +\frac{ e^{-(\lambda+\xi)}}{\sqrt \veps (\lambda+\xi)}\big)\,. $$

\end{lemma}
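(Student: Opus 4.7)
The plan is to start from the explicit representation~\eqref{eq-V-expression} of $V_f$, which is a $t$-weighted convolution in $z$ of the Gaussian kernel $G_{\veps t}(z-\bar z)\doteq (4\veps^p t^p)^{-1/2}e^{-\|z-\bar z\|^2/(4\veps t)}$ against $H^f_t(\bar z,y)$. Since Lemma~\ref{lem-property} only yields control on $H^f_t$ and its \emph{first} derivatives in $\bar z$ and $y$, the idea is to place \emph{one} $z$-derivative on $H^f_t$ via integration by parts, and leave the second $z$-derivative on the Gaussian. The key identity is $\nabla_z G_{\veps t}(z-\bar z)=-\nabla_{\bar z} G_{\veps t}(z-\bar z)$. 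Integrating by parts once in $\bar z$ (no boundary contribution, since the Gaussian decays and $H^f_t$ together with $\nabla_{\bar z} H^f_t$ grow only like $1+\cV(y)$ uniformly in $\bar z$ by Lemma~\ref{lem-property}), one obtains
\begin{align*}
\nabla_z^2 V_f(z,y)=\int_0^\infty e^{-\lambda t}\int_{\RR^p}\frac{z-\bar z}{2\veps t}\cdot G_{\veps t}(z-\bar z)\,\nabla_{\bar z}H^f_t(\bar z,y)\,\D\bar z\,\D t,
\end{align*}
and an analogous formula for $\nabla_z^2\nabla_y V_f$ with $\nabla_{\bar z}\nabla_y H^f_t$ in place of $\nabla_{\bar z}H^f_t$ (since $\nabla_y$ commutes with both $\nabla_z$ and $\nabla_{\bar z}$).

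Next, I would bound the inner spatial integral by the change of variables $w=(z-\bar z)/\sqrt{\veps t}$, which turns $\int_{\RR^p} G_{\veps t}(z-\bar z)\,\|z-\bar z\|/(2\veps t)\,\D \bar z$ into $(\veps t)^{-1/2}$ times a purely dimensional constant (the first absolute moment of a standard Gaussian in $\RR^p$). This produces the $\veps^{-1/2}t^{-1/2}$ factor that accounts for the $\veps^{-1/2}$ in the claimed bound. Using Lemma~\ref{lem-property} with $(l_z,l_y)=(1,0)$ or $(1,1)$, the corresponding derivative of $H^f_t$ is absorbed into a bound of the form $(1+\cV(y))\,\Psi(t)$, where $\Psi(t)\le C\max\{\|f\|_*,\|\nabla f\|_*\}$ for $0<t\le 1$ and $\Psi(t)\le C e^{-\xi t}\max\{\|f\|_*,\|\nabla f\|_*\}$ for $t>1$.

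Dividing by $1+\cV(y)$ and splitting the time integral, one obtains
\begin{align*}
\frac{|\nabla_z^2 V_f(z,y)|}{1+\cV(y)}\le \frac{C\max\{\|f\|_*,\|\nabla f\|_*\}}{\sqrt{\veps}}\left(\int_0^1\frac{e^{-\lambda t}}{\sqrt t}\D t+\int_1^\infty \frac{e^{-(\lambda+\xi)t}}{\sqrt t}\D t\right),
\end{align*}
and the same estimate for $\nabla_z^2\nabla_y V_f$. Bounding $\int_0^1 t^{-1/2}\D t=2$ and $\int_1^\infty e^{-(\lambda+\xi)t}t^{-1/2}\D t\le e^{-(\lambda+\xi)}/(\lambda+\xi)$ (since $t^{-1/2}\le 1$ on $[1,\infty)$) yields the claimed bound, with the two terms on the right hand side tracking the short-time and long-time contributions, respectively.

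The main subtlety, rather than any hard computation, is justifying differentiation under the double integral and the integration by parts at the edge $t=0^+$, where the Gaussian kernel concentrates. I would handle this by first restricting the $t$-integral to $[\delta,\infty)$, performing all the manipulations there (where the kernel and its derivatives are bounded and the boundary terms at $\bar z\to\infty$ vanish by the $\|\cdot\|_*$-bounds from Lemma~\ref{lem-property}), and then sending $\delta\to 0$ using dominated convergence with the integrable bound $t^{-1/2}$ obtained above. This same cutoff argument also takes care of the initial dominated convergence that underlies the representation~\eqref{eq-V-expression}.
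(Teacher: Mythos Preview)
Your proposal is correct and follows essentially the same route as the paper: one $z$-derivative is transferred onto $H^f_t$ via $\nabla_z G_{\veps t}(z-\bar z)=-\nabla_{\bar z}G_{\veps t}(z-\bar z)$ and integration by parts, the remaining $z$-derivative hits the Gaussian and produces the factor $\|z-\bar z\|/(2\veps t)$, and the $t$-integral is split at $t=1$ with Lemma~\ref{lem-property} applied for $(l_z,l_y)=(1,0)$ and $(1,1)$. Your added care about justifying differentiation under the integral and the boundary terms near $t=0^+$ is a welcome refinement over the paper's terse treatment.
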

\begin{proof}
Fix $f\in \cX$ and recall~\eqref{eq-V-expression}:
\begin{align*}
\nabla_zV_f(z,y)&= - \int_0^\infty\int_{\RR^p} \frac{e^{-\lambda t}}{\sqrt{4 \veps^p t^p}}  e^{-\frac{\|z-\bar z\|^2}{4\veps t}} \nabla_zH^f_t(\bar z ,y) \D \bar z\D t\\
&= -\int_0^1 \int_{\RR^p} \frac{e^{-\lambda t}}{\sqrt{4 \veps^p t^p}} e^{-\frac{\|z-\bar z\|^2}{4\veps t}} \nabla_z H^f_t(\bar z ,y) \D \bar z\D t -\int_1^\infty\int_{\RR^p} \frac{e^{-\lambda t}}{\sqrt{4 \veps^p t^p}} e^{-\frac{\|z-\bar z\|^2}{4\veps t}} \nabla_zH^f_t(\bar z ,y)\D \bar z\D t\,.
\end{align*}
From here, we have
 \begin{align*}
\frac{ \big\|\nabla^2_z V(z,y)\big\|}{1+\cV(y)} &\leq  \Big\|\int_0^1\int_{\RR^p}  \frac{2e^{-\lambda t}\|z-\bar z\|}{\sqrt{4 \veps^{p+1} t^{p+1 }}}  e^{-\frac{\|z-\bar z\|^2}{4\veps t}}\frac{ \nabla_z H^f_t(\bar z,y)}{1+\cV(y)}\D \bar z\D t\Big\| \\
&\quad\quad+ \Big\|\int_1^\infty\int_{\RR^p} \frac{2 e^{-\lambda t}\|z-\bar z\|}{\sqrt{4 \veps^{p+1} t^{p+1}}}e^{-\frac{\|z-\bar z\|^2}{4\veps t}} \frac{\nabla_zH_t^f(\bar z,y)}{1+\cV(y)} \D \bar z\D t\Big\|\\
&\leq  \int_0^1\int_{\RR^p}  \frac{2e^{-\lambda t}\|z-\bar z\|}{\sqrt{4 \veps^{p+1} t^{p+1}}}  e^{-\frac{\|z-\bar z\|^2}{4\veps t}}\|\nabla_z H^f_t(\bar z,y)\|_* \D \bar z\D t \\
&\quad\quad+   \int_1^\infty\int_{\RR^p} \frac{2 e^{-\lambda t}\|z-\bar z\|}{\sqrt{4 \veps^{p+1} t^{p+1}}}e^{-\frac{\|z-\bar z\|^2}{4\veps t}}\|\nabla_z H^f_t(\bar z,y)\|_* \D \bar z\D t\\
&\leq C\max\big\{\|f\|_*,\|\nabla f\|_*\big\} \big(\frac{1}{\sqrt \veps } +\frac{ e^{-(\lambda+\xi)}}{\sqrt \veps (\lambda+\xi)}\big)\,.
 \end{align*}
To get the last line, we use Lemma~\ref{lem-property} for $l_z=1$ and $l_y=0$, and evaluate integrals in $t$. Now consider 
 \begin{align*}
 \frac{\|\nabla^2_z\nabla_yV_f(z,y)\|}{1+\cV(y)}&\leq \Big\|\int_0^1\int_{\RR^p}  \frac{2e^{-\lambda t}\|z-\bar z\|}{\sqrt{4 \veps^{p+1} t^{p+1 }}}  e^{-\frac{\|z-\bar z\|^2}{4\veps t}}\frac{ \nabla_z\nabla_y H^f_t(\bar z,y)}{1+\cV(y)}\D \bar z\D t\Big\| \\
&\quad\quad+ \Big\|\int_1^\infty\int_{\RR^p} \frac{2e^{-\lambda t}\|z-\bar z\|}{\sqrt{4 \veps^{p+1} t^{p+1}}}e^{-\frac{\|z-\bar z\|^2}{4\veps t}} \frac{\nabla_z\nabla_yH_t^f(\bar z,y)}{1+\cV(y)} \D \bar z\D t\Big\|\\
&\leq  \int_0^1\int_{\RR^p}  \frac{2e^{-\lambda t}\|z-\bar z\|}{\sqrt{4 \veps^{p+1} t^{p+1}}}  e^{-\frac{\|z-\bar z\|^2}{4\veps t}}\|\nabla_z\nabla_y H^f_t(\bar z,y)\|_* \D \bar z\D t \\
&\quad\quad+   \int_1^\infty\int_{\RR^p} \frac{2e^{-\lambda t}\|z-\bar z\|}{\sqrt{4 \veps^{p+1} t^{p+1}}}e^{-\frac{\|z-\bar z\|^2}{4\veps t}}\|\nabla_z\nabla_y H^f_t(\bar z,y)\|_* \D \bar z\D t\\
&\leq C\max\big\{\|f\|_*,\|\nabla f\|_*\big\} \big(\frac{1}{\sqrt \veps } +\frac{ e^{-(\lambda+\xi)}}{\sqrt \veps (\lambda+\xi)}\big)\,.
 \end{align*}
Again, to get the last line, we use Lemma~\ref{lem-property} for $l_z=1$ and $l_y=1$, and evaluate integrals in $t$. This completes the proof.
\end{proof}
}

\begin{lemma} \label{lem-est}Suppose Assumptions~\ref{assump-regularity} and~\ref{assump-stab} hold, and let $\bar \pi$ and $\pi^{*,\gamma}$ be as defined earlier.  Then, for $\veps, \lambda>0$ small enough, there exists $C=C(\veps,\lambda)>0$ such that 
\begin{align} \label{eq-contraction}
\rho(\bar\pi,\pi^{*,\gamma})\leq C\sup_{f\in \cX^*_1}\Big|\int_0^T  \bar \pi_t \big[\mathfrak{L}^{\gamma_t}[V_f(\gamma_t,\cdot)]\big]\D t \Big|\,.
\end{align}
Here, $\cX_1^*\doteq \{f\in \cX: \|f\|_*,\|\nabla f\|_*\leq 1\}$.
\end{lemma}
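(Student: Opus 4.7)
The plan is to substitute the perturbed Poisson identity~\eqref{eq-PE-p} into the functional defining $\rho$, and then absorb the two regularization errors back into the left-hand side by exploiting the smallness of $\veps$ and $\lambda$. Fix $f\in \cX_1^*$ and evaluate~\eqref{eq-PE-p} at $z=\gamma_t$; integrating against $\bar\pi_t(\D y)$ and then over $t\in[0,T]$ produces the identity
\begin{align*}
\int_0^T \big(\bar\pi_t[f]-\pi^{*,\gamma_t}[f]\big)\,\D t
&= \int_0^T \bar\pi_t\big[\mathfrak{L}^{\gamma_t} V_f(\gamma_t,\cdot)\big]\,\D t\\
&\quad + \veps\int_0^T \bar\pi_t\big[\nabla_z^2 V_f(\gamma_t,\cdot)\big]\,\D t-\lambda\int_0^T \bar\pi_t\big[V_f(\gamma_t,\cdot)\big]\,\D t,
\end{align*}
whose first summand is exactly the quantity on the right of the lemma; I will call its supremum over $f\in\cX_1^*$ simply $A$.

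Next, I would control the two error terms using the pointwise bounds from Lemmas~\ref{lem-est-Vz} and~\ref{lem-est-gradV}: for $f\in\cX_1^*$ one has $\|V_f(z,\cdot)\|_*\leq c_1(\lambda)$ and $\|\nabla_z^2 V_f(z,\cdot)\|_*\leq c_2(\lambda)\,\veps^{-1/2}$, both uniformly in $z$, with $c_1(\lambda)$ and $c_2(\lambda)$ remaining bounded as $\lambda\downarrow 0$. Hence
\[
\veps\big|\bar\pi_t[\nabla_z^2 V_f(\gamma_t,\cdot)]\big|+\lambda\big|\bar\pi_t[V_f(\gamma_t,\cdot)]\big|\;\leq\; C(\sqrt\veps+\lambda)\,\bar\pi_t[1+\cV],
\]
so the two error contributions together are bounded by $C(\sqrt\veps+\lambda)\int_0^T\bar\pi_t[1+\cV]\,\D t$.

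To close the loop, observe that $1+\cV$ itself lies in $\cX_1^*$ up to normalization: clearly $\|1+\cV\|_*=1$, and since $\nabla\cV(y)=2Ay$ and $\cV(y)\geq \lambda_{\min}(A)\|y\|^2$, the ratio $\|2Ay\|/(1+\cV(y))$ is bounded by some structural constant $C^*$. Combined with $\pi^{*,z}[\cV]\leq \beta_0/\beta_1$ (Remark~\ref{rem-inv-meas}) and the very definition of $\rho$, this yields the self-referential estimate
\[
\int_0^T \bar\pi_t[1+\cV]\,\D t \;\leq\; C^*\,\rho(\bar\pi,\pi^{*,\gamma})+T\big(1+\tfrac{\beta_0}{\beta_1}\big).
\]
Substituting this back and taking the supremum over $f\in\cX_1^*$,
\[
\rho(\bar\pi,\pi^{*,\gamma})\;\leq\;A\;+\;\widetilde C(\sqrt\veps+\lambda)\Big[C^*\rho(\bar\pi,\pi^{*,\gamma})+T\big(1+\tfrac{\beta_0}{\beta_1}\big)\Big].
\]
Choosing $\veps,\lambda$ small enough that $\widetilde C C^*(\sqrt\veps+\lambda)\leq \tfrac12$ absorbs the $\rho$-term on the right, and the remaining $T$-dependent residue is folded into the admissible constant $C=C(\veps,\lambda)$.

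The main obstacle is exactly this self-referential appearance of $\bar\pi_t[1+\cV]$: it cannot be controlled by $A$ alone, and this is precisely what forces the use of the $z$-regularized Poisson equation~\eqref{eq-PE-p} rather than the unperturbed version~\eqref{eq-PE-o}. The perturbation supplies the two small parameters that absorb the self-reference, at the price of introducing the very error terms that they themselves control, so the bookkeeping on $c_1(\lambda)$ and $c_2(\lambda)\veps^{-1/2}$ coming from Lemmas~\ref{lem-est-Vz}--\ref{lem-est-gradV} must be tight enough to keep both $\veps\|\nabla_z^2 V_f\|_*$ and $\lambda\|V_f\|_*$ of order $\sqrt\veps+\lambda$.
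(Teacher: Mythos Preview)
There is a genuine gap in the final step. After absorbing the self-referential $\rho$-term you arrive at
\[
\rho(\bar\pi,\pi^{*,\gamma})\;\leq\;2A\;+\;2\widetilde C(\sqrt\veps+\lambda)\,T\big(1+\tfrac{\beta_0}{\beta_1}\big),
\]
and the residual additive term on the right is \emph{not} proportional to $A$; it is a fixed positive number once $\veps,\lambda,T$ are chosen. You cannot ``fold'' it into the multiplicative constant $C$ unless $A$ is bounded away from zero, which it is not: in the application (Proposition~\ref{prop-est}) one has $A=O(n^{-1})$, so your inequality would only yield $\rho=O(1)$ rather than $\rho=O(n^{-1})$, and the whole scheme collapses.

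The fix, and what the paper actually does, is to integrate the perturbed Poisson identity~\eqref{eq-PE-p} against \emph{both} $\bar\pi_t$ and $\pi^{*,\gamma_t}$ and subtract. Since the right side of~\eqref{eq-PE-p} has $\pi^{*,\gamma_t}$-mean zero, and since $\pi^{*,\gamma_t}\big[\mathfrak{L}^{\gamma_t}V_f(\gamma_t,\cdot)\big]=0$, the error terms become genuine differences
\[
\lambda\int_0^T\big(\bar\pi_t-\pi^{*,\gamma_t}\big)\big[V_f(\gamma_t,\cdot)\big]\D t
\quad\text{and}\quad
\veps\int_0^T\big(\bar\pi_t-\pi^{*,\gamma_t}\big)\big[\nabla_z^2 V_f(\gamma_t,\cdot)\big]\D t,
\]
which are bounded directly by (constant)$\times\rho(\bar\pi,\pi^{*,\gamma})$ with \emph{no} additive residue. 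Note, however, that invoking the definition of $\rho$ here requires control not only of $\|V_f(z,\cdot)\|_*$ and $\|\nabla_z^2 V_f(z,\cdot)\|_*$ but also of $\|\nabla_y V_f(z,\cdot)\|_*$ and $\|\nabla_z^2\nabla_y V_f(z,\cdot)\|_*$, since membership in $\cX_1^*$ constrains both the function and its gradient; this is precisely why Lemmas~\ref{lem-est-Vz} and~\ref{lem-est-gradV} supply those gradient bounds as well.
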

\begin{proof} For $f\in \cX$ such that $\|f\|_*,\|\nabla f\|_*\leq 1$,  consider
\begin{align*}
&\Big|\int_0^T \big(\bar \pi_t[f]- \pi^{*,\gamma_t}[f]\big) \D t\Big|\\
&= \Big|\int_0^T \big(\bar \pi_t\big[ \veps \nabla^2_zV_f(\gamma_t,\cdot)+ \mathfrak L^{\gamma_t} [V_f(\gamma_t,\cdot)]-\lambda V_f(\gamma_t,\cdot) \big]- \pi^{*,\gamma_t}\big[\veps \nabla^2_zV_f(\gamma_t,\cdot)+ \mathfrak L^{\gamma_t} [V_f(\gamma_t,\cdot)]-\lambda V_f(\gamma_t,\cdot)  \big]\big) \D t\Big|\\
&=  \Big|\int_0^T \bar \pi_t[\mathfrak L^{\gamma_t} [V_f(\gamma_t,\cdot)]]\D t + \int_0^T \big( \bar \pi_t\big[\veps \nabla^2_zV_f(\gamma_t,\cdot)- \lambda V_f(\gamma_t,\cdot) ]- \pi^{*,\gamma_t}\big[\veps \nabla^2_zV_f(\gamma_t,\cdot)-\lambda V_f(\gamma_t,\cdot)  \big]\big) \D t\Big|\\
&\leq \Big|\int_0^T \bar \pi_t[\mathfrak L^{\gamma_t} [V_f(\gamma_t,\cdot)]]\D t\Big| 
+ \lambda \Big|\int_0^T \big( \bar \pi_t\big[ V_f(\gamma_t,\cdot) ]- \pi^{*,\gamma_t}\big[ V_f(\gamma_t,\cdot)  \big]\big) \D t\Big|\\
&\qquad + \veps \Big|\int_0^T \big( \bar \pi_t\big[  \nabla^2_zV_f(\gamma_t,\cdot) ]- \pi^{*,\gamma_t}\big[\nabla^2_zV_f(\gamma_t,\cdot) \big]\big) \D t\Big|\\
&\leq \Big|\int_0^T \bar \pi_t[\mathfrak L^{\gamma_t} [V_f(\gamma_t,\cdot)]]\D t\Big| + C \lambda  \max\Big\{\frac{1}{\lambda+\xi}, \frac{1-e^{-\lambda}}{\lambda} + \frac{e^{-(\lambda+\xi) }}{\lambda+\xi}  \Big\}\max\big\{\|f\|_*,\|\nabla f\|_*\big\} \rho (\pi,\pi^{*,\gamma})\\
&\qquad + C \veps \max\big\{\|f\|_*,\|\nabla f\|_*\big\} \big(\frac{1}{\sqrt \veps } +\frac{ (e^{-(\lambda+\xi)})}{\sqrt \veps (\lambda+\xi)}\big) \rho (\pi,\pi^{*,\gamma})\,.
\end{align*} 
In the above, to get the first line, we use~\eqref{eq-PE-p}; to get the second line, we use the fact that  $\pi^{*,\gamma_t}\big[ \mathfrak L^{\gamma_t} [V_f(\gamma_t,\cdot)]\big]=0$; to get the last line, we use Lemmas~\ref{lem-est-Vz} and~\ref{lem-est-gradV}, along with the definition of $\rho$ in~\eqref{eq-norm-t}. From the choice of $f\in \cX$ and choosing $\veps,\lambda>0$ small enough and, we can ensure that $$K(\veps, \lambda)\doteq C \lambda  \max\Big\{\frac{1}{\lambda+\xi}, \frac{1-e^{-\lambda}}{\lambda} + \frac{e^{-(\lambda+\xi) }}{\lambda+\xi}  \Big\}+ C \veps  \big(\frac{1}{\sqrt \veps } +\frac{ (e^{-(\lambda+\xi)})}{\sqrt \veps (\lambda+\xi)}\big)<1\,. $$ From the arbitrariness of $f$ and definition of $\rho$, we have 
$$ \rho(\bar \pi,\pi^{*,\gamma})\leq \frac{1}{1-K(\veps,\lambda)}\sup_{f\in \cX^*_1}\Big|\int_0^T  \bar \pi_t \big[\mathfrak{L}^{\gamma_t}[V_f(\gamma_t,\cdot)]\big]\D t \Big|\,.$$ 
This completes the proof with $C(\veps,\lambda)\doteq \big(1-K(\veps,\lambda)\big)^{-1}$.
\end{proof}

In the next result, we estimate $\rho(\pi^n_t,\pi^{*,X^n_t})$, in terms of $n$ using Lemma~\ref{lem-est}. 
 \begin{proposition}\label{prop-est}
Suppose Assumptions~\ref{assump-regularity} and~\ref{assump-stab} hold and let $Z$ be a real valued non-negative random variable such that $\E[Z^2]<\infty$. Then for $\pi^n=\{\pi^n_r:0\leq r\leq T\}$ and $\pi^{*,X^n}=\{\pi^{*,X^n_r}:0\leq r\leq T\}$, we have
\begin{align}\label{eq-Z-0-est}
\E\Big[ Z\rho\big(\pi^n,\pi^{*,X^n}\big)\Big]\leq  C  n^{-1},
\end{align}
for some $C=C(T,\bar y,\E[Z^2])>0$. 
\end{proposition}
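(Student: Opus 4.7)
The plan is to combine Lemma~\ref{lem-est} (applied pathwise) with the Kushner--Stratanovich equation (Proposition~\ref{prop-KS}) applied to the smoothed Poisson solution $V_f$ from~\eqref{eq-PE-p}; the factor $1/n$ in the target bound will appear for free by exploiting that the fast generator in~\eqref{eq-KS} carries a factor of $n$. Concretely, I would first apply Lemma~\ref{lem-est} with $\bar\pi=\pi^n$ and $\gamma=X^n$ to reduce the problem to bounding
$$\E\Big[Z\sup_{f\in\cX_1^*}\Big|\int_0^T \pi^n_r\big[\mathfrak L^{X^n_r}[V_f(X^n_r,\cdot)]\big]\D r\Big|\Big].$$
Since $V_f\in\cC^2(\RR^p\times\RR^q)$ and the uniform-in-$f$ $\ast$-bounds of Lemmas~\ref{lem-est-Vz} and~\ref{lem-est-gradV}, combined with Remark~\ref{rem-k-moment} for $k=2$, verify the integrability condition~\eqref{eq-KS-cond} with $V_f$ in place of $f$, I can invoke Proposition~\ref{prop-KS}. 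Rearranging the resulting identity (and using that $Y^n$ has generator $n\mathfrak L^{X^n_r}$) gives
$$n\int_0^T \pi^n_r[\mathfrak L^{X^n_r}[V_f(X^n_r,\cdot)]]\D r = \pi^n_T[V_f(X^n_T,\cdot)] - V_f(\bar x,\bar y) - \int_0^T \pi^n_r[\widehat{\mathfrak L}^{X^n_r,\cdot}[V_f(X^n_r,\cdot)]]\D r - M^n_T(f),$$
where $M^n_T(f)$ denotes the stochastic-integral martingale from~\eqref{eq-KS}. The $1/n$ is now explicit, and it remains to control the supremum over $f\in\cX_1^*$ of the four terms on the right-hand side, weighted by $Z$.

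For the three non-martingale terms, the plan is to establish pathwise bounds uniform in $f$. Lemma~\ref{lem-est-Vz} yields $|V_f(z,y)|\leq C(1+\cV(y))$ for $f\in\cX_1^*$; the same representation~\eqref{eq-V-expression} that drives Lemma~\ref{lem-est-gradV} also produces a uniform bound on $\|\nabla_z V_f\|_\ast$ (by inserting the $\nabla_z^{l_z}\nabla_y^{l_y}$ estimate of Lemma~\ref{lem-property} with $l_z=1$, $l_y=0$), while $\|\nabla_z^2 V_f\|_\ast$ is handled directly by Lemma~\ref{lem-est-gradV}. Combining these with boundedness of $b$ and the linear growth of $\sigma$ (Assumption~\ref{assump-regularity}), a pathwise dominant of the form $C(1+\cV(\bar y)+\pi^n_T[\cV]+\int_0^T(1+\|X^n_r\|^2)(1+\pi^n_r[\cV])\D r)$ emerges, independent of $f$. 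Its second moment is finite uniformly in $n$ by Remark~\ref{rem-k-moment} (applied for $k=1,2$) and Remark~\ref{rem-sup-est}, so a Cauchy--Schwarz step against $Z$ contributes $O(\E[Z^2]^{1/2})$, which becomes $O(n^{-1})$ after the prefactor $1/n$ is restored.

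The main obstacle is the martingale term $\sup_{f\in\cX_1^*}|M^n_T(f)|$, for which no pathwise uniform-in-$f$ bound is immediate. The key observations I would exploit are: (i) the stochastic integrand $(\pi^n_r[V_f b^\dagger]-\pi^n_r[V_f]\pi^n_r[b^\dagger])\sigma^{-1}(X^n_r)$ is bounded pathwise by $C(1+\pi^n_r[\cV])$ uniformly in $f\in\cX_1^*$, because $|V_f|\leq C(1+\cV)$, $\|b\|_\infty<\infty$, and $\|\sigma^{-1}\|\leq\delta^{-1/2}$ from Assumption~\ref{assump-regularity}(iii); and (ii) $f\mapsto V_f$ is linear (visible from~\eqref{eq-V-expression}), so $f\mapsto M^n_T(f)$ is a bounded linear functional on the Banach space underlying $\cX_1^*$, and the supremum over its unit ball can be reduced to a countable dense subset, ensuring measurability. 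A Cauchy--Schwarz step $\E[Z\sup_f|M^n_T(f)|]\leq \E[Z^2]^{1/2}\E[(\sup_f|M^n_T(f)|)^2]^{1/2}$ together with a BDG/maximal-inequality argument using the uniform integrand bound yields $\E[(\sup_f|M^n_T(f)|)^2]\leq C\int_0^T\E[(1+\pi^n_r[\cV])^2]\D r\leq C(T)$, where the right-hand side is controlled by Remark~\ref{rem-k-moment} for $k=2$. Summing the four contributions and dividing by $n$ delivers the claimed bound $\E[Z\rho(\pi^n,\pi^{*,X^n})]\leq Cn^{-1}$ with $C=C(T,\bar y,\E[Z^2])$.
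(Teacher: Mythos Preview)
Your overall strategy matches the paper's: invoke Lemma~\ref{lem-est} pathwise, apply the Kushner--Stratanovich identity to $V_f$, rearrange to expose the factor $n^{-1}$, and then bound the four remaining terms uniformly over $f\in\cX_1^*$ after a Cauchy--Schwarz step against $Z$. The handling of the non-martingale pieces is also in line with the paper (though note that the paper actually calls on Remark~\ref{rem-k-moment} for $k$ as large as $8$, not just $k=2$, because $\widehat{\mathfrak L}^{z,y}[V_f]$ involves products of linearly growing coefficients with $\|\cdot\|_*$-bounded derivatives of $V_f$; this is a calibration detail, not a structural issue).

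The genuine gap is in your treatment of the martingale term. The inference ``uniform integrand bound $\Rightarrow \E[(\sup_{f}|M^n_T(f)|)^2]\le C\int_0^T\E[(1+\pi^n_r[\cV])^2]\,\D r$ via BDG'' is not justified: BDG (or the It\^o isometry) controls $\E[|M^n_T(f)|^2]$ for each \emph{fixed} $f$ through its quadratic variation, but says nothing about $\E[\sup_{f\in\cX_1^*}|M^n_T(f)|^2]$. A uniform bound on the integrand yields a uniform bound on each quadratic variation, hence on each second moment, but the supremum over an infinite-dimensional unit ball of martingales with common variance bound need not have a finite second moment. Linearity of $f\mapsto M^n_T(f)$ and reduction to a countable dense set resolve measurability only; they do not produce the estimate.

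The paper handles this term differently: it applies the Dambis--Dubins--Schwarz time change to write $M_t(f)$ as a Brownian motion evaluated at $\langle M(f)\rangle_t$, and then uses your uniform integrand bound to dominate $\langle M(f)\rangle_T$ by an $f$-independent random time $\omega^n_T=C\int_0^T(1+|\pi^n_r[\cV]|^2)\,\D r$. This yields a \emph{pathwise} bound $|M_T(f)|\le\sup_{0\le s\le\omega^n_T}|I^n_s|$ whose right-hand side is already free of $f$, so the supremum over $\cX_1^*$ is absorbed \emph{before} any expectation is taken; Doob's inequality together with $\sup_n\E[\omega^n_T]<\infty$ then finishes the bound. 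This pathwise-before-sup device is the ingredient your argument is missing.
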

\begin{proof} 
For $f\in \cX^*_1$ (recall $\cX^*_1$ from Lemma~\ref{lem-est}), consider the corresponding $V_f$ defined \emph{via.}~\eqref{eq-PE-p}.  We now apply the Kushner-Stratanovich equation (which is~\eqref{eq-KS}) to function $ V_f(X^n_t,Y^n_t)$. Before we do this, we verify that $V_f$ satisfies the conditions of Proposition~\ref{prop-KS}. To begin with, we recall that  $ V_f (\cdot,\cdot)$ is twice differentiable in both the arguments. From Lemma~\ref{lem-est-Vz}, we have  
\begin{align}
|V_f(z,y)|\leq C \max\Big\{\frac{1}{\lambda+\xi}, \frac{1-e^{-\lambda}}{\lambda} + \frac{e^{-(\lambda+\xi) }}{\lambda+\xi}  \Big\}\max\big\{\|f\|_*,\|\nabla f\|_*\big\}\big(1+\cV(y)\big)\,.
\end{align}
Applying Remark~\ref{rem-k-moment} for $k\geq 2$, we can infer that 
\begin{align}\label{eq-sup-V} \sup_{0\leq t\leq T}\E\big[\big(1+\cV_k(Y^n_t)\big)^2\big]<\infty\,.\end{align}
Similarly, using Lemmas~\ref{lem-est-Vz} and~\ref{lem-est-gradV}  we have 
\begin{align*}
K\doteq \sup_{z\in \RR^p}&\max\big\{\|\nabla_zV_f(z,\cdot)\|_*,\|\nabla_y V_f(z,,\cdot)\|_*, \|\nabla_z^2 V_f(z,\cdot)\|_*\big\}<\infty\,.  \\
\implies \max&\big\{\|\nabla_zV_f(z,y)\|,\|\nabla_y V_f(z,y)\|, \|\nabla_z^2 V_f(z,\cdot)\|\big\}\leq K \big(1+\cV(y)\big)\,.
\end{align*}
From the definition of $\cV$, $\widehat {\mathfrak L}^{z,y}$ and  $\mathfrak L^{z}$ boundedness of $b$, the linear growth (uniform in $z$) of coefficients $\sigma(z)$, $h(z,y)$ and $\eta(z,y)$, we obtain
\begin{align*}
\sup_{z\in \RR^p}\big|\widehat {\mathfrak L}^{z,y}[V_f(z,y)\big|^2+ \big|\mathfrak L^{z} [V_f(z,y)]\big|^2\leq \bar K \big(1+ \| y\|^3\big)\,.
\end{align*}
Again, from Remark~\ref{rem-k-moment}, we get $\int_0^T \E\Big[\Big(\big|\widehat {\mathfrak L}^{X^n_s,Y^n_s}[f(X^n_s,Y^n_s)\big|^2+ \big|\mathfrak L^{X^n_s} [f(X^n_s,Y^n_s)]\big|^2\Big)\D s\Big]<\infty\,.$ This completes the verification of conditions of Proposition~\ref{prop-KS} \emph{viz.,}~\eqref{eq-KS-cond}.

 Now upon subsequent rearrangement of terms {in the Kushner-Stratanovich equation}, we get 
\begin{align*}
n \int_0^t \pi_{r}^n\big[\mathfrak L^{X^n_r} [ V_f(X^n_r,\cdot)]\big]\D r&= \pi^n_{t}[V_f(X^n_t,\cdot)]- V_f(\bar x,\bar y) -\int_0^t \pi_{r}^n\big[\widehat {\mathfrak L}^{X^n_r,\cdot}[V_f(X^n_r,\cdot)]\D r\\
 &\quad - \int_0^t \left(\pi^n_{r}[ V_f(X^n_r,\cdot)b^\dagger(X^n_r,\cdot)]-\pi_{r}^n[ V_f(X^n_r,\cdot)]\pi^n_{r}[b^\dagger(X^n_r,\cdot)]\right){\sigma^{-1}(X^n_r)}\D I^n_r, \quad \text{$\PP$--a.s.}
 \end{align*}
Recall $\cX^*_1$ from Lemma~\ref{lem-est}. From the above expression for $t=T$, we obtain the following: 
 \begin{align}\nonumber
 &n\E\Big[Z\sup_{f\in \cX^*_1} \Big| \int_0^T \pi_{r}^n\big[\mathfrak L^{X^n_r} [V_f(X^n_r,\cdot)]\big]\D r\Big|\Big]\\\nonumber
 &\leq \E[Z]\sup_{f\in \cX^*_1}\big|V_f(\bar x, \bar y)\big| +\int_0^T  \E\Big[Z\pi_{r}^n\big[\sup_{f\in \cX^*_1}| \widehat {\mathfrak L}^{X^n_r,\cdot}[V_f(X^n_r,\cdot)\big|] \Big]\D r +\E\Big[Z\sup_{f\in \cX^*_1} \big|\pi^n_{T}\big[ V_f(X^n_T,Y^n_T)\big|\big]\Big]  \\\label{eq-ks-est-1}
 &\quad+\E\Big[Z\sup_{f\in \cX^*_1}\big|\int_0^T \left(\pi^n_{r}[ V_f(X^n_r,\cdot)b^\dagger(X^n_r,\cdot)]-\pi_{r}^n[ V_f(X^n_r,\cdot)]\pi^n_{r}[b^\dagger(X^n_r,\cdot)]\right){\sigma^{-1}(X^n_r)}\D I^n_r\big| \Big]\\\nonumber
 &\doteq \sqrt{\E[Z^2]}\sup_{f\in \cX^*_1}\big|V_f(\bar x, \bar y)\big|+ J^n_1 +J^n_2+J^n_3\\\nonumber
 &\leq  C\big(1+\cV(\bar y)\big) + J^n_1+J^n_2+ J^n_3\,.
 \end{align}
Therefore, to complete the proof it suffices to establish the boundedness (uniform in $n$) of $J^n_1$, $J^n_2$ and $J^n_3$.

{ \bf (i) Boundedness of $J^n_1$:}  Consider
\begin{align*}
J^n_1&=\int_0^T  \E\Big[Z\sup_{f\in \cX^*_1}| \widehat {\mathfrak L}^{X^n_r,\cdot}[V_f(X^n_r,\cdot)\big| \Big]\D r\\
&\leq \int_0^T \Big( 2\E[Z^2] +2\E\Big[\sup_{f\in \cX^*_1}| \widehat {\mathfrak L}^{X^n_r,\cdot}[V_f(X^n_r,\cdot)\big|^2 \Big]\Big)\D r\\
&\leq   2\E[Z^2]T +2C \int_0^T\E\Big[ (1+\|Y^n_r\|^8) \Big]\D r\\
&=  2\E[Z^2]T +2CT+ 2C \int_0^T\E\Big[ \|Y^n_r\|^8 \Big]\D r\\
&\leq 2\E[Z^2]T +2CT+ 2C T\big( \cV_8(\bar y) + \beta_0(8) T\big) \,.
\end{align*}
In the above, to get the third line, we use Lemma~\ref{lem-est-gradV} and linear growth condition of $\sigma$ and $b$ and to get the last line, we use Remark~\ref{rem-k-moment} for $k=8$.

{ \bf (i) Boundedness of $J^n_2$:} From Lemma~\ref{lem-est-Vz},  we have
\begin{align*}
J^n_2=\E\Big[Z\sup_{f\in \cX^*_1} \big|\pi^n_{T}\big[ V_f(X^n_T,Y^n_T)\big|\big]\Big]
&\leq  \big(\E[Z^2]\big)^{\frac{1}{2}}\Big(\E\Big[\sup_{f\in \cX^*_1} \big|\pi^n_{T}\big[ \cV(Y^n_T)\big|^2 \Big]\Big)^{\frac{1}{2}}\\
&\leq \big(\E[Z^2]\big)^{\frac{1}{2}}\Big(\E\Big[\sup_{f\in \cX^*_1} \big|\pi^n_{T}\big[ \cV(Y^n_T)\big|^2 \Big]\Big)^{\frac{1}{2}} \\
&\leq \big(\E[Z^2]\big)^{\frac{1}{2}}\Big(\E\Big[\big(1+\cV(Y^n_T)\big)^2 \Big]\Big)^{\frac{1}{2}}\\
&\leq  \big(\E[Z^2]\big)^{\frac{1}{2}} \big( \cV_2(\bar y) + \beta_0(2) T\big) ^{\frac{1}{2}}\,.
\end{align*}
In the above, to get the third line, we use Lemma~\ref{lem-est-Vz} and to get the last line, we again use Remark~\ref{rem-k-moment} for $k=2$.

{\bf (ii) Boundedness of $J^n_3$:}  From~\ref{eq-sup-V}, Remark~\ref{rem-k-moment}  and the boundedness  of $b$,  we can conclude that  $\{M_t:0\leq t\leq T\}$ defined as
\begin{align}
M_t=M^{n,f}_t\doteq \int_0^T \left(\pi^n_{r}[ V_f(X^n_r,\cdot)b^\dagger(X^n_r,\cdot)]-\pi_{r}^n[ V_f(X^n_r,\cdot)]\pi^n_{r}[b^\dagger(X^n_r,\cdot)]\right)\sigma^{-1}(X^n_r)\D I^n_r
\end{align}
is indeed a square integrable martingale. With $\langle M\rangle_\cdot$ being the quadratic variation of $M$,  define a stopping time $\tau_t=\inf\{s:\langle M\rangle_t>s\}$.  
From \cite[Theorem V.1.6]{revuz1999continuous}, with Brownian motion $I^n$, we have the following representation of $M$: $M_t= I^n_{\langle M\rangle_t}\,.$
From the definition of $\langle M\rangle$ and the definition of $\|\cdot\|_*$, we have
\begin{align*}
\langle M\rangle _t &= \int_0^t \Big\|\pi^n_{r}[ V_f(X^n_r,\cdot)b^\dagger(X^n_r,\cdot)]-\pi_{r}^n[ V_f(X^n_r,\cdot)]\pi^n_{r}[b^\dagger(X^n_r,\cdot)]\sigma^{-1}(X^n_r) \Big\|^2 \D r\\
&\leq  \delta \|b\|^2_\infty \int_0^t \Big(\Big|\pi^n_{r}[ V_f(X^n_r,\cdot)]\Big|^2+\Big|\pi_{r}^n[ V_f(X^n_r,\cdot)] \Big|^2\Big) \D r\\
&\leq \delta \|b\|^2_\infty \|f\|^2_* \int_0^t \big(2+ 2\big|\pi^n_{r}[ \cV]\big|^2\big) \D r\\
&\doteq \|f\|_*^2\omega^n_t
\end{align*} 
From the above bound and the representation of $M$, we can infer the following: for $f\in \cX^*_1$,
\begin{align*}
|M_t| = |I^n_{\langle M\rangle _t}|\leq \sup_{0\leq s\leq \|f\|_*^2\omega^n_t}|I^n_s|\leq  \sup_{0\leq s\leq \omega^n_t}|I^n_s|\leq \sup_{0\leq s\leq \omega^n_T}|I^n_s|\,.
\end{align*}
Since the right hand side of the last inequality is independent of $f$, we have
\begin{align*}
J^n_3=\E\Big[Z\sup_{f\in \cX^*_1} |M_t|\Big] \leq \big(\E[Z^2]\big)^{\frac{1}{2}}\Big(\E\Big[Z\sup_{f\in \cX^*_1} |M_t|\Big] \Big)^{\frac{1}{2}}\leq \big(\E[Z^2]\big)^{\frac{1}{2}}\Big(\E\Big[\sup_{0\leq s\leq \omega^n_T}|I^n_s|^2\Big]\Big)^{\frac{1}{2}}\,.
\end{align*} From Doob's maximal inequality and the fact that $\sup_n\E\big[\omega^n_T\big]<\infty$, we obtain $\sup_nJ^n_3<\infty$\,.  This proves~\eqref{eq-Z-0-est}.
\end{proof}
\begin{remark} \label{rem-any-t}From the arguments in the proof of Proposition~\ref{prop-est},it is clear that  the estimate~\eqref{eq-Z-0-est}  remains unchanged when $T$ in the definition of  $\rho$ is replaced by any $0\leq t\leq T$.  Moreover, the constant $C>0$ in Proposition~\ref{prop-est} can be chosen uniformly in $0\leq t\leq T$.
\end{remark}
We now give an immediate corollary to the above proposition. This corollary has a critical use in the proof of Theorem~\ref{thm-main} given in the next section.
\begin{corollary}\label{cor-b-est} For $b$ in~\eqref{eq-X}, under Assumptions~\ref{assump-regularity} and~\ref{assump-stab}, the following hold: there exists a constant $C= C(\|b\|_*)$ such that  for $0\leq t\leq T$,
\begin{align}\label{eq-est-b-1} 
\E\Big[ \Big|\int_0^t\langle X^n_s-X^{*,n}_s, \Delta^n_s\rangle\D s\Big|\Big]\leq Cn^{-1}
\end{align}
Here, $
\Delta^n_t(X^n_t)\doteq \int_{\RR^q} b(X^n_t,y)\pi^n_t(\D y)  -\int_{\RR^q} b(X^n_t,y)\pi^{*, X^n_t}(\D y) $.
\end{corollary}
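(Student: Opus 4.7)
The plan is to reduce the target estimate to a bound of the time-integrated fluctuation $\Psi^i_t\doteq \int_0^t (\pi^n_r - \pi^{*,X^n_r})[b_i(X^n_r,\cdot)]\,\D r$, which resembles but slightly extends the quantity already controlled by Proposition~\ref{prop-est}, and then to transfer the multiplicative factor $(X^n_s-X^{*,n}_s)_i$ outside the time-integral via stochastic integration by parts.

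First, I would write
\[
\int_0^t \langle X^n_s - X^{*,n}_s, \Delta^n_s\rangle\, \D s \;=\; \sum_{i=1}^p \int_0^t Z^i_s\, \D\Psi^i_s,
\]
where $Z^i_s \doteq (X^n_s - X^{*,n}_s)_i$. Because $\Psi^i$ is of finite variation and $Z^i_0 = 0$, integration by parts gives
\[
\int_0^t Z^i_s\, \D\Psi^i_s \;=\; Z^i_t\Psi^i_t - \int_0^t \Psi^i_s\, \D Z^i_s.
\]

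Second, I would establish that $\|\Psi^i_t\|_{L^2}=O(n^{-1})$ uniformly in $t\in[0,T]$. Since $b_i(X^n_r,\cdot)$ depends on $r$ through $X^n_r$, Proposition~\ref{prop-est} does not apply verbatim. To circumvent this, I would set up a perturbed Poisson equation analogous to~\eqref{eq-PE-p} but with the $x$-dependent right-hand side $\tilde b_i(x,y)-\bar{\tilde b}_i(x)$, where $\tilde b_i\doteq b_i/C_b$ and $C_b\doteq\max\{\|b\|_\infty,\operatorname{Lip}(b)\}$; the bound $\tilde b_i(x,\cdot)\in \cX^*_1$ holds uniformly in $x$ by Assumption~\ref{assump-regularity}(i)-(ii). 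The associated solution $V^i(x,y)$ admits a Feynman--Kac representation generalizing~\eqref{eq-V-expression}, and the analogues of Lemmas~\ref{lem-est-Vz} and~\ref{lem-est-gradV} carry over via Remark~\ref{rem-ergodic} combined with the Lipschitz continuity of $\bar b_i$ from Lemma~\ref{lem-lipschitz}. Applying the Kushner--Stratonovich equation (Proposition~\ref{prop-KS}) to $V^i(X^n_s,Y^n_s)$ and substituting the Poisson relation then expresses $n\Psi^i_t$ as boundary terms $\pi^n_t[V^i(X^n_t,\cdot)]-V^i(\bar x,\bar y)$, an integral involving $\hat{\mathfrak L}^{X^n_s,\cdot}[V^i]$, order-$\veps$ and order-$\lambda$ error terms, and a square-integrable martingale. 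The first three are $L^2$-bounded uniformly in $n$ as in the proof of Proposition~\ref{prop-est}, and the $\veps,\lambda$ error terms are themselves of the form amenable to a secondary application of Proposition~\ref{prop-est}, so the desired $n^{-1}$ rate follows.

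Third, with $\|\Psi^i_t\|_{L^2}=O(n^{-1})$ in hand, the two terms are controlled as follows. By Cauchy--Schwarz and the moment bound from Remark~\ref{rem-sup-est}, $\E[|Z^i_t \Psi^i_t|]\le \|Z^i_t\|_{L^2}\|\Psi^i_t\|_{L^2}=O(n^{-1})$. For $\E[|\int_0^t \Psi^i_s\, \D Z^i_s|]$, write $\D Z^i_s = \mu^i_s\,\D s+\nu^i_s\,\D I^n_s$ with $\mu^i_s=\pi^n_s[b_i(X^n_s,\cdot)]-\bar b_i(X^{*,n}_s)$ bounded (by boundedness of $b$ and Lipschitz continuity of $\bar b$) and $\|\nu^i_s\|\le \operatorname{Lip}(\sigma)\|X^n_s-X^{*,n}_s\|$; the drift contribution is controlled by Fubini using $\sup_{s\le T}\|\Psi^i_s\|_{L^2}=O(n^{-1})$, and the martingale contribution is controlled via Burkholder--Davis--Gundy combined with the $L^2$-moment bound on $X^n-X^{*,n}$.

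The main obstacle will be carefully extending the Poisson-equation framework of Section~\ref{sec-filter} to the $x$-dependent right-hand side $\tilde b_i(x,y)-\bar{\tilde b}_i(x)$ and verifying quantitative analogues of Lemmas~\ref{lem-est-Vz}--\ref{lem-est-gradV} in this broader setting; once that groundwork is laid, the rest of the argument is a routine combination of It\^o calculus, the moment bounds in Remark~\ref{rem-k-moment} and Remark~\ref{rem-sup-est}, and the uniform-in-$t$ version of Proposition~\ref{prop-est} stated in Remark~\ref{rem-any-t}.
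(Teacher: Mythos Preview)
Your approach is workable in spirit but much more elaborate than the paper's. The paper's proof is essentially four lines: write $\langle X^n_s-X^{*,n}_s,\Delta^n_s\rangle = \|X^n_s-X^{*,n}_s\|\,\langle \tfrac{X^n_s-X^{*,n}_s}{\|X^n_s-X^{*,n}_s\|},\Delta^n_s\rangle$, bound the scalar factor $\|X^n_s-X^{*,n}_s\|$ uniformly by $Z\doteq\sup_{0\le s\le T}(\|X^n_s\|+\|X^{*,n}_s\|)$, recognize the remaining time integral as dominated (up to the constant $\max\{\text{Lip}(b),\|b\|_*\}$) by $\rho(\pi^n,\pi^{*,X^n})$, and invoke Proposition~\ref{prop-est} with this very $Z$, whose second moment is finite by Remark~\ref{rem-sup-est}. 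The whole point of carrying a generic random weight $Z$ through Proposition~\ref{prop-est} is precisely that it absorbs the prefactor $X^n_s-X^{*,n}_s$ in one stroke; no integration by parts is needed and one never has to isolate $\Psi^i_t$ on its own.

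Your route, by contrast, commits to a freestanding $L^2$ bound on $\Psi^i_t$, which forces you to redo the Poisson-equation machinery of Section~\ref{sec-filter} for $x$-dependent sources. One genuine soft spot: the ``secondary application of Proposition~\ref{prop-est}'' you invoke for the $\veps,\lambda$ residual terms is not really an application, since those residuals again involve $x$-dependent test functions $V^i(X^n_r,\cdot)$ and $\nabla^2_z V^i(X^n_r,\cdot)$, which fall outside the fixed-$f$ supremum defining $\rho$. What is actually needed there is a repetition of the \emph{contraction} step of Lemma~\ref{lem-est} for an enlarged $\rho$ that allows path-dependent test functions---so you are not applying Proposition~\ref{prop-est} but rather re-proving Lemma~\ref{lem-est} and Proposition~\ref{prop-est} in a strictly broader setting. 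That can be carried out, but it is exactly the extra work the paper's shorter argument sidesteps by keeping the random prefactor bundled with the fluctuation and using the $Z$-slot in Proposition~\ref{prop-est}.
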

\begin{proof} Consider
\begin{align*} 
\E\Big[ \Big|\int_0^t\langle X^n_s-X^{*,n}_s, \Delta^n_s\rangle\D s\Big|\Big]&= \E\Big[ \Big|\int_0^t\|X^n_s-X^{*,n}_s\| \langle \frac{(X^n_s-X^{*,n}_s)}{\|X^n_s-X^{*,n}_s\|}, \Delta^n_s\rangle\D s\Big|\Big]\\
&\leq  \E\Big[ \sup_{0\leq s\leq T}\big(\|X^n_s\|+ \|X^{*,n}_s\|\big)\Big|\int_0^t\langle \frac{(X^n_s-X^{*,n}_s)}{\|X^n_s-X^{*,n}_s\|}, \Delta^n_s\rangle\D s\Big|\Big]\\
&\leq  \max\{\text{Lip}(b),\|b\|_*\} \E\Big[ \sup_{0\leq s\leq T}\big(\|X^n_s\|+ \|X^{*,n}_s\|\big) \rho (\pi^n, \pi^{*,X^n})\Big]\\ 
&\leq Cn^{-1}
\end{align*}
In the above, to get the third line, we use definitions of $\Delta^n_t$ and $\rho$ and to get the last line, we use Proposition~\ref{prop-est}. 
\end{proof}
}

\section{Proofs of our main results }\label{sec-proof} 
\subsection{Proof of Theorem~\ref{thm-main}}
{We first prove that~\eqref{eq-limit} has a unique strong solution. This follows immediately from the Lipschitz continuity of $\bar b$ which is established in Lemma~\ref{lem-lipschitz}.}

\begin{proof}[Completing the proof of Theorem~\ref{thm-main}] Recall that $X^n$ and $X^{*,n}$ satisfy 
~\eqref{eq-representation} and~\eqref{eq-limit}. Using this, $X^n$ can be further represented as 
\begin{align}\label{eq-representation-fin}
\D X^n_t=\int_{\RR^q} b(X^n_t,y)\pi^{*, X^n_t}(\D y)\D t + \Delta^n_t(X^n_t)\D t +\sigma(X^n_t) \D  I^n_t,  \,
\end{align} 
where  $\Delta^n_t(X^n_t)$ is as defined in Corollary~\ref{cor-b-est}. From here, the proof of Theorem~\ref{thm-main} reduces to the analysis of $X^n- X^{*,n}$ using  the representation~\eqref{eq-representation-fin} of $X^n$ and~\eqref{eq-limit}. 

We now apply It\^o's formula to $\|X^n_t-X^{*,n}_t\|^2$ (this is possible due to Remark~\ref{rem-sup-est})  to get
{\begin{align}\nonumber
\E\Big[\|X^n_t-X^{*,n}_t\|^2\Big] &\leq 2 \int_0^t \E\Big[  \|X^n_s-X^{*,n}_s\| \|\bar b(X^n_s)- \bar b( X^{*,n}_s)\|\Big]\D s +2 \E\Big[ \Big|\int_0^t \langle X^n_s-X^{*,n}_s, \Delta^n_s(X^n_s)\rangle \D s\Big|\Big]  \\\nonumber
&\quad+\int_0^t \E\Big[\|\sigma(X^n_s) -\sigma(X^{*,n}_s)\|^2\Big] \D s + 2\E\Big[\int_0^t \langle X^n_s-X^{*,n}_s,\big(\sigma(X^n_s) -\sigma(X^{*,n}_s) \big)\D I^n_s\rangle  \Big]\\\nonumber
&\leq \big(2 \text{Lip}(\bar b)+\text{Lip}(\sigma)\big)\int_0^t \E\Big[\|X^n_s-X^{*,n}_s\|^2\Big]\D s +2 \E\Big[ \Big|\int_0^t \langle X^n_s-X^{*,n}_s, \Delta^n_s(X^n_s)\rangle \D s\Big|\Big]\\
\label{eq-fin-1}
&\leq \big(2 \text{Lip}(\bar b)+\text{Lip}(\sigma)\big)\int_0^t \E\Big[\|X^n_s-X^{*,n}_s\|^2\Big]\D s +Cn^{-1}
\end{align}
In the second line, we use~\eqref{eq-moment-est} of Remark~\ref{rem-sup-est} and the linear growth property of $\sigma$ to conclude that the stochastic integral inside the expectation in the first line is in fact a square integrable martingale. Hence its expectation is zero. To get the second inequality, we use the Lipschitz property of  $\bar b$ (see Lemma~\ref{lem-lipschitz}) and $\sigma$ (see Assumption~\ref{assump-regularity}) and  to get the last inequality, we use Corollary~\ref{cor-b-est}.}    From Gr\"onwall's  inequality, we now have 
\begin{align}\label{eq-sup-out} \sup_{0\leq t\leq T}\E\Big[\|X^n_t- X^{*,n}_t\|^2\Big]\leq \widehat Cn^{-1}, \end{align}
for some $\widehat C=\widehat C(T)>0$.  This proves~\eqref{eq-thm-est-1}.

We now prove~\eqref{eq-thm-est-2} for $1<m<2$. This is done using a stopping time argument which is similar to that in \cite[Pg. 1529-1530]{champagnat2018}.  Define $A_t\doteq \sup_{0\leq s\leq t}\|X^n_s-X^{*,n}_s\|$ and for $l>0$, a stopping time $$\tau^n_l\doteq\inf\{t\geq 0:A_t\geq l\}\wedge (T+1)\,.$$ For $0\leq t\leq T$, repeating the above computation with $\|X^n_{t\wedge \tau^n_l}-X^{*,n}_{t\wedge \tau^n_l}\|^2$, we obtain 
\begin{align*}
\E\Big[\|X^n_{t\wedge \tau^n_l}-X^{*,n}_{t\wedge \tau^n_l}\|^2\Big] &\leq 2\E\Big[\int_0^{t\wedge \tau^n_l}   \|X^n_s-X^{*,n}_s\| \|\bar b(X^n_s)- \bar b(X^{*,n}_s)\|\D s\Big] +2 \E\Big[ \Big|\int_0^{t\wedge \tau^n_l} \langle X^n_s-X^{*,n}_s, \Delta^n_s(X^n_s)\rangle \D s\Big|\Big]  \\
&\quad+\E\Big[\int_0^{t\wedge \tau^n_l} \|\sigma(X^n_s) -\sigma(X^{*,n}_s)\|^2 \D s\Big]{+2\E\Big[\int_0^{t\wedge \tau^n_l} \langle X^n_s-X^{*,n}_s,\big(\sigma(X^n_s) -\sigma(X^{*,n}_s)\big)\D I^n_s\rangle   \Big]}\\
&\leq 2\E\Big[\int_0^{t}   \|X^n_s-X^{*,n}_s\| \|\bar b(X^n_s)- \bar b(X^{*,n}_s)\|\D s\Big] + 2 \E\Big[ \Big|\int_0^{t} \langle X^n_s-X^{*,n}_s, \Delta^n_s(X^n_s)\rangle \D s\Big|\Big] \\
&\quad+\E\Big[\int_0^{t} \|\sigma(X^n_s) -\sigma(X^{*,n}_s)\|^2 \D s\Big]\\
&\leq \frac{\big(2 \text{Lip}(\bar b)+\text{Lip}(\sigma)\big) \widehat C T}{n} +\frac{C}{n}\doteq  \frac{\widehat C'}{n}\,.
\end{align*}
In the above,  {to get the second inequality, we again use~\eqref{eq-moment-est} and the linear growth property of $\sigma$ to conclude that the stochastic integral inside the expectation in the first line is in fact a square integrable martingale; to get the third inequality, we again use the Lipschitz property of  $\bar b$ (see Lemma~\ref{lem-lipschitz}) and $\sigma$ (see Assumption~\ref{assump-regularity}) and  to get the last inequality, we use Corollary~\ref{cor-b-est}.}   From the above analysis, we can infer that for $t=T$, 
\begin{align*} \E\Big[\|X^n_{\tau^n_l}-X^{*,n}_{\tau^n_l}\|^2\Ind_{\{ \tau^n_l\leq T\}}\Big]+ \E\Big[\|X^n_{ T}-X^{*,n}_{T}\|^2\Ind_{\{\tau^n_l>T\}}\Big]&= \E\Big[\|X^n_{T\wedge \tau^n_l}-X^{*,n}_{T\wedge \tau^n_l}\|^2\Big]  \leq \widehat C'n^{-1}\\
\E\Big[\|X^n_{\tau^n_l}-X^{*,n}_{\tau^n_l}\|^2\Ind_{\{ \tau^n_l\leq T\}}\Big]&\leq \E\Big[\|X^n_{T\wedge \tau^n_l}-X^{*,n}_{T\wedge \tau^n_l}\|^2\Big]  \leq \widehat C'n^{-1}\\
l^2 \E\Big[\Ind_{\{ \tau^n_l\leq T\}}\Big]&\leq \E\Big[\|X^n_{T\wedge \tau^n_l}-X^{*,n}_{T\wedge \tau^n_l}\|^2\Big]  \leq \widehat C'n^{-1}
\,.\end{align*}
To get the last line, we use the fact that $\|X^n_{\tau^n_l}-X^{*,n}_{\tau^n_l}\|\geq l$ which follows from the definition of $\tau^n_l$ and the continuity of processes $X^n$ and $X^{*,n}$. To summarize, we obtain 
$$ \PP\big(\tau^n_l\leq T\big)\leq \frac{\widehat C'}{n l^2}\,.$$
Now it is easy to see that
\begin{align}\label{eq-fin-2}
\PP\Big(\sup_{t\in [0,T]}\|X^n_t-X^{*,n}_t\|\geq l\Big)=\PP\big(\tau^n_l\leq T\big)\leq  \frac{\widehat C'}{n l^2}
\end{align}
For $1<m<2$, consider 
\begin{align*}
\E\Big[\sup_{t\in [0,T]}\|X^n_t-X^{*,n}_t\|^m\Big]&= m\int_0^\infty x^{m-1} \PP\Big(\sup_{t\in [0,T]}\|X^n_t-X^{*,n}_t\|\geq x\Big) \D x\\
&\leq m\int_0^\infty x^{m-1} \Big( \frac{\widehat C'}{n x^2} \wedge 1\Big) \D x\\
&\leq  m\int_0^{\sqrt {\frac{\widehat C'}{n}}} x^{m-1} \D x +\frac{m\widehat C'}{n} \int_{\sqrt {\frac{\widehat C'}{n}}} ^\infty x^{m-3} \D x\\
&\leq \frac{2}{2-m} \Big(\frac{\widehat C'}{n}\Big)^\frac{m}{2}\,.
\end{align*}
In the above, to get the second line, we use the fact that probability is bounded from above by $1$ and~\eqref{eq-fin-2}. This completes the proof of Theorem~\ref{thm-main}. 
\end{proof}

\subsection{Proof of Theorem~\ref{thm-main-weak}} 
Fix { $0<\theta<1$ and $\phi\in \cC^{2,\theta}_b(\RR^p)$.  Also,} let $P^*_t(x, \D y)$ be the semigroup associated with process $X^*$; recall $X^*$ from~\eqref{eq-limit-*}.  Also, let $\mathfrak{L}^*$ be the infinitesimal generator of $X^*$. { Now consider $P^*_{t-r}[\phi](x)= \int_{\RR^p} \phi(y)P^*_{t-r}(x, \D y)$ which is differentiable in $r$ and twice differentiable in $x$, due to \cite[Theorem 2.1]{rubio2011existence}.  We then apply It\^o's formula to 
$P^*_{t-r}[\phi](X^n_r)$} to get 
\begin{align*} 
\E\big[ \phi(X^n_t)\big]&= \E\big[\phi(X^*_t)\big] + \E\Big[\int_0^t \big(\frac{\partial }{\partial r} P^*_{t-r}\big)[\phi](X^n_r) \D r+ \int_0^t \big(P^*_{t-r} [\mathfrak{L}^*\phi](X^n_r) + \Delta^n_r(X^n_r)\cdot\nabla P^*_{t-r}\big[\phi\big](X^n_r)\big)\D r\Big]\\
&\qquad + \E\big[ \int_0^t \langle \nabla P^*_{t-r}[\phi](X^n_r), \sigma(X^n_r)\D W_r\rangle\big]\\
&= \E\big[\phi(X^*_t)\big] + \E\Big[\int_0^t \big( \Delta^n_r(X^n_r)\cdot \nabla P^*_{t-r} [\phi](X^n_r)\big)\D r\Big]\,.
\end{align*}
{In the first line above, to get the term $P^*_{t-r} [\mathfrak{L}^*\phi](X^n_r)$, we use the fact that operators $P^*_{t-r}$ and $\mathfrak{L}^*$ commute - $P^*_{t}$ is the semigroup associated with $\mathfrak{L}^*$.}
To get the second equality, we use the fact that $$\big(\frac{\partial }{\partial r} P^*_{t-r}\big)[\phi](x)+P^*_{t-r} [\mathfrak{L}^*\phi](x)=0$$ { and also, the fact that the stochastic integral inside the expectation is a martingale. To see that it a martingale, we infer that $\sup_{0\leq t\leq T}\|\nabla P^*_{t-r}[\phi]\|_{\infty}$ from the argument given below and the linear growth of $\sigma$, in conjunction with~\eqref{eq-moment-est}.} From here, we have
\begin{align*}
\Big|\E\big[ \phi(X^n_t)\big]- \E\big[\phi(X^*_t)\big] \Big|&\leq \E\Big[\big|\int_0^t\Delta^n_r(X^n_r)\cdot\nabla P^*_{t-r}  [\phi](X^n_r)\D r\big|\Big]\\
&\leq \sup_{0\leq t\leq T}\|\nabla P^*_{t}[\phi]\|_\infty \max\{\text{Lip}(b),\|b\|_*\} \E\Big[\rho(\pi^n,\pi^{*,X^n})\Big]\\
&\leq \sup_{0\leq t\leq T}\|\nabla P^*_{t}[\phi]\|_\infty \max\{\text{Lip}(b),\|b\|_*\}  \bar Cn^{-1}\,.
\end{align*}
for some $\bar C=\bar C(T)>0$. To get the last inequality, we applied  Proposition~\ref{prop-est}.  {From \cite[Theorem 4.1]{lorenzi2000optimal}, we can infer the following: for a constant $K= K(T)>0$ (independent of $\phi$, but possibly depend on $T$) 
\begin{align*}\sup_{0\leq t\leq T}\| P^*_{t}[\phi]\|_{2,\theta,\infty} \leq K\|\phi\|_{2,\theta,\infty}\,. \end{align*}
In particular, this implies $\sup_{0\leq t\leq T}\|\nabla P^*_{t}[\phi]\|_{\infty} \leq K\|\phi\|_{2,\theta,\infty}$.

Therefore,   we get
$$ \sup_{0\leq t\leq T}\Big|\E\big[ \phi(X^n_t)\big]- \E\big[\phi(X^*_t)\big] \Big|\leq  \bar CK \|\phi\|_{2,\theta,\infty} n^{-1},$$}
for some $C=C(T)\doteq \bar C(T) K(T)>0$.  This proves the result.\qed

\section*{Acknowledgement}
We thank the referee for helpful comments that significantly improved the quality of the paper.
\appendix
{\section{Proof of Lemma~\ref{lem-property}} 
To begin with, the existence of differentiable density $p_t(z,y,\bar y)$ follows as a direct consequence of \cite[Theorem 1.2]{menozzi2021}. 
The rest of the  proof follows exactly along the same lines as the proof of \cite[Lemma 3.4]{rockner2019strongweakconvergenceaveraging} (for $l=0$) and the proof of \cite[Lemma 3.5]{rockner2019strongweakconvergenceaveraging} (for $l=1$; in Lemma 3.5 the case  $l=2$ is also considered, but we only restrict ourselves to arguments associated with $l=1$).  The arguments in the \cite[Lemmas 3.4 and 3.5]{rockner2019strongweakconvergenceaveraging} are built  on  appropriate estimates of $\|\nabla_y p_t\|$ and $\|\nabla_y^2 p_t\|$ (for all $t>0$) which are proved under certain assumptions on coefficients $\eta$ and $h$ \emph{viz.,}  boundedness and  H\"older continuity  in both $z,y$ and differentiability in $y$. In the following, we  give similar estimates of $\|\nabla_y p_t\|$ and $\|\nabla_y^2 p_t\|$ (for all $t>0$) under Assumptions~\ref{assump-regularity} and~\ref{assump-stab}. With these estimates at hand, we can then follow the arguments from aforementioned results in \cite{rockner2019strongweakconvergenceaveraging}.

To that end,  we define $\phi:\RR^p\times \RR^q\times \RR_+\rightarrow \RR^q$ as follows: for every $z\in \RR^p$,  $$  \frac{\D}{\D t} \phi(z,y,t)=h\big(z,\phi(z,y,t)\big)\, \text{ such that }\phi(z,y,0)=y\,. $$
Now from \cite[Theorem 1.2(ii)-(iii)]{menozzi2021}, there are constants $\alpha,C>0$ (independent of $z$) such that for $0<t\leq 1$,
\begin{align}\label{eq-grad-p}
\|\nabla_y p_t(z,y,\bar y)\|&\leq Ct^{-\frac{p+1}{2}} e^{-\alpha \frac{\|\phi(z,y,t)-\bar y\|^2}{t}}\\\label{eq-ggrad-p}
  \|\nabla^2_y p_t(z,y,\bar y)\|&\leq Ct^{-\frac{p+2}{2}} e^{-\alpha \frac{\|\phi(z,y,t)-\bar y\|^2}{t}}\,.
\end{align}

Below, we prove the following: there exists constant $C>0$ such that 
$$ \|\nabla_y p_t(z,y,\bar y)\| \leq Ce^{-\xi t}(1+\cV(y)), \text{ for $t\geq 1$}\,. $$
To do this, we adapt the technique used in \cite{pardoux2003} (in particular, Page 1178 of that paper). With $p_\infty(z,\bar y)$ being the density of the invariant measure $\pi^{*,z}$, we use the Chapman-Kolmogorov equation to infer that for $t=1$,
$$ \nabla_y p_\infty (z,\bar y)= 0= \int_{\RR^q} \nabla_yp_1(z,y,\hat  y)p_\infty(z,\bar y) \D \hat y\,.$$
Moreover, we can infer the following: for $t> 1$
\begin{align*}
\nabla_yp_t(z,y,\bar y)=\int_{\RR^q}\nabla_y p_{1}(z,y,\hat y)\big(p_{t-1}(z,\hat y,\bar y)- p_\infty(z,\bar y)\big)\D \hat y\,.
\end{align*}
From the above display and~\eqref{eq-grad-p}, we obtain
\begin{align*}
\frac{\big\|\nabla_yp_t(z,y,\bar y)\big\|}{1+\cV(y)}&\leq \frac{1}{1+\cV(y)}\int_{\RR^q}\|\nabla_y p_{1}(z,y,\hat y)\| \big|p_{t-1}(z,\hat y,\bar y)- p_\infty(z,\bar y)\big|\D \hat y\\
&\leq \frac{C}{1+\cV(y)}\int_{\RR^q}e^{-\alpha \|\phi(z,y,t)-\hat  y\|^2}\big|p_{t-1}(z,\hat y,\bar y)- p_\infty(z,\bar y)\big|\D \hat y\\
&\leq \frac{C}{1+\cV(y)} e^{-\xi t} \big(1+\cV(y)\big) \,.
\end{align*}
To obtain the third line, we use the argument in \cite[Page 1177]{pardoux2003} along with Remark~\ref{rem-ergodic}. 

Arguing similarly, we can also obtain 
$$ \|\nabla^2_y p_t(z,y,\bar y)\| \leq Ce^{-\xi t}(1+\cV(y)), \text{ for $t\geq 1$}\,. $$}

		\bibliographystyle{aomplain}	
	\bibliography{Two_scale_dynamics-R1}
	\end{document}